\newtheorem{thm}{Theorem}
\newtheorem{cor}{Corollary}
\newtheorem{lem}{Lemma}
\newtheorem{prop}{Proposition}
\theoremstyle{definition}
\newcommand{\BC}{\mathbb{C}}
\newcommand{\BN}{\mathbb{N}}
\begin{document}

%
%
%
%

\title[Uniqueness of the solution]
    {Uniqueness of the solution of some nonlinear singular 
       partial differential equations of the second order}

\author{Hidetoshi Tahara}
\address{Department of Information and 
        Communication Sciences, Sophia University, Tokyo 
        102-8554, Japan.}
\email{h-tahara@sophia.ac.jp}

\keywords{Uniqueness of the solution, 
nonlinear partial differential equation, second order equation.}

\subjclass[2020]{Primary 35A02; Secondary 35G20, 35B60}

%
%
%
%

\begin{abstract}
   In this paper, we consider a nonlinear singular second order partial 
differential equation of the form
\[
    \Bigl(t \frac{\partial}{\partial t} \Bigr)^2u
     = F \Bigl(t,x, \Bigl\{ 
         \Bigl(t \frac{\partial}{\partial t} \Bigr)^i
         \Bigl(\frac{\partial}{\partial x} \Bigr)^\alpha
           u \Bigr\}_{i+|\alpha| \leq 2,i<2} \Bigr)
\]
in the complex domain.  If $F(t,x,z)$ (with 
$z=\{z_{i,\alpha} \}_{i+|\alpha| \leq 2,i<2}$) is a holomorphic function
satisfying $F(0,x,0) \equiv 0$ and
$(\partial F/\partial z_{i,\alpha})(0,x,0)$ $\equiv 0$ (if $|\alpha|>0$), 
then this equation is called a nonlinear Fuchsian type partial 
differential equation in $t$. Under a very weak assumption, we show the 
uniqueness of the solution. The result is applied to the problem of 
analytic continuation of local holomorphic solutions of this equation.
\end{abstract}

\maketitle

%
%
%
%

\section{Introduction}\label{section1}
%

   To study the uniqueness of the solution is one of the most
fundamental problems in the theory of partial differential equations.
In this paper, we consider the case of nonlinear singular partial 
differential equations (\ref{1.1}) given below.
\par
   Let $m \in \BN^*$ ($=\{1,2,\ldots \}$). 
Let $(t,x)=(t,x_1,\ldots,x_n) \in \BC_t \times \BC_x^n$ be a complex 
variable and let 
$z=\{z_{i,\alpha}\}_{i+|\alpha| \leq m,i<m} \in \BC^N$ be also a  
complex variable, where we used the notations: $i \in \BN$ 
($=\{0,1,2,\ldots \}$), $\alpha=(\alpha_1,\ldots,\alpha_n) \in \BN^n$, 
$|\alpha|=\alpha_1+\cdots+\alpha_n$, and 
$N=\#\{(i,\alpha) \in \BN \times \BN^n \,;\, i+|\alpha| \leq m, i<m \}$.
We write $(\partial/\partial x)^{\alpha}
    = (\partial/\partial x_1)^{\alpha_1} \cdots 
            (\partial/\partial x_n)^{\alpha_n}$.
\par
   Let $F(t,x,z)$ be a function in a neighborhood $\Delta$ of the 
origin of $\BC_t \times \BC_x^n \times \BC_z^N$. 
Set $\Delta_0=\Delta \cap \{t=0, z=0 \}$.
In this paper, we consider a nonlinear singular partial differential 
equation
\begin{equation}\label{1.1}
    \Bigl(t \frac{\partial}{\partial t} \Bigr)^mu
     = F \Bigl(t,x,
      \Bigl\{ \Bigl(t \frac{\partial}{\partial t} \Bigr)^i
         \Bigl(\frac{\partial}{\partial x} \Bigr)^\alpha
           u \Bigr\}_{i+|\alpha| \leq m,i<m}  \Bigr)
\end{equation}
(in the germ sense at $(0,0) \in \BC_t \times \BC_x^n$) 
under the following assumptions:
\par
\medskip
   $\mbox{A}_1)$ \enskip $F(t,x,z)$ is a holomorphic function in $\Delta$.
\par
   $\mbox{A}_2)$ \enskip $F(0,x,0) \equiv 0$ in
       $\Delta_0$. 
\par
   $\mbox{A}_3)$ \enskip $(\partial F/\partial z_{i,\alpha})
           (0,x,0) \equiv 0$ in $\Delta_0$, if $|\alpha|>0$.
\par
\medskip
\noindent
Then, (\ref{1.1}) is called a nonlinear Fuchsian type partial 
differential equation with respect to $t$, and the roots 
$\lambda_1(x), \ldots, \lambda_m(x)$ of
\[
    \lambda^m- \sum_{i<m}(\partial F/\partial z_{i,0})
          (0,x,0) \lambda^i = 0
\]
are called the characteristic exponents of (\ref{1.1}). 
\par
   Equations of this type were first studied by G\'erard-Tahara 
\cite{gt, book}, and then by Tahara-Yamazawa \cite{yamazawa}. The most 
fundamental result on (\ref{1.1}) is:

\begin{thm}[G\'erard-Tahara \cite{gt}]\label{Theorem1}
    Suppose that $\lambda_i(0) \not\in \BN^*$ holds for $i=1,\ldots,m$. 
Then, the equation {\rm (\ref{1.1})} has a unique holomorphic 
solution $u_0(t,x)$ in a neighborhood of $(0,0) \in \BC_t \times \BC_x^n$ 
satisfying $u_0(0,x) \equiv 0$.
\end{thm}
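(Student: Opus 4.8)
The plan is to construct the solution as a formal power series in $t$, determine its coefficients recursively, and then prove convergence by the method of majorants; uniqueness will come for free from the uniqueness of the formal coefficients.

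First, since $u_0(0,x)\equiv0$ forces the $t$-constant term to vanish, I would seek a solution $u(t,x)=\sum_{k\ge1}u_k(x)t^k$ with each $u_k$ holomorphic near $x=0$. The basic observation is that $t\partial/\partial t$ acts on $t^k$ as multiplication by $k$, so $(t\partial/\partial t)^i(\partial/\partial x)^\alpha u=\sum_{k\ge1}k^i(\partial/\partial x)^\alpha u_k\,t^k$. Writing the Taylor expansion of $F$ at $z=0$ as $F(t,x,z)=f(t,x)+\sum_{i<m,\,|\alpha|\le m-i}a_{i,\alpha}(t,x)z_{i,\alpha}+(\text{order}\ge2\text{ in }z)$, with $f(t,x)=F(t,x,0)$ and $a_{i,\alpha}(t,x)=(\partial F/\partial z_{i,\alpha})(t,x,0)$, I would substitute and collect the coefficient of $t^k$. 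Assumptions $\mathrm{A}_2)$ and $\mathrm{A}_3)$ are exactly what is needed: they give $f(0,x)\equiv0$ and $a_{i,\alpha}(0,x)\equiv0$ for $|\alpha|>0$, so that all contributions except the diagonal terms $a_{i,0}(0,x)k^iu_k$ carry an extra factor of $t$ (and the nonlinear terms are products of at least two coefficients, each of index $\ge1$), hence depend only on $u_1,\dots,u_{k-1}$. This yields the recursion
\[
 P_k(x)\,u_k(x)=G_k\bigl(x;\{(\partial/\partial x)^\beta u_j\}_{j<k}\bigr),
 \qquad
 P_k(x):=k^m-\sum_{i<m}(\partial F/\partial z_{i,0})(0,x,0)\,k^i,
\]
where $G_k$ is a polynomial in the indicated derivatives with holomorphic coefficients and $P_k(x)=\prod_{j=1}^m(k-\lambda_j(x))$.

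Second, I would solve the recursion. Since $\lambda_j(0)\not\in\BN^*$ and $k\in\BN^*$, we have $P_k(0)\ne0$ for every $k\ge1$. Moreover $P_k(x)/k^m\to1$ uniformly on a small polydisc as $k\to\infty$, so combining the large-$k$ asymptotics with the continuity of the finitely many small-$k$ factors yields a uniform bound $|P_k(x)|\ge c\,k^m$ for all $k\ge1$ and all $x$ in a common polydisc $|x|\le r_0$, with $c>0$. Dividing by $P_k(x)$ then determines each $u_k$ uniquely as a holomorphic function on $|x|\le r_0$, establishing existence and uniqueness of the formal solution.

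Third — and this is the main obstacle — I would prove that $\sum_{k\ge1}u_k(x)t^k$ converges near the origin by the method of majorants. The delicate point is the loss of $x$-derivatives: a linear term contributes $(\partial/\partial x)^\alpha u_j$ with $i+|\alpha|\le m$, $i<m$, at index $j\le k-1$, so up to $m$ spatial derivatives are lost at each step. The saving is a tight balance encoded in the structural bound $i+|\alpha|\le m$ together with the gain $|P_k(x)|^{-1}=O(k^{-m})$. Working in the scale of spaces of functions holomorphic on the polydiscs $|x|<r$ ($0<r\le r_0$), recovering up to $m$ derivatives when shrinking the radius from $r'$ to $r$ costs a factor $\lesssim(r'-r)^{-m}$ by Cauchy's inequality; choosing the radii to decrease by $\sim r_0/k$ at the $k$-th step, and using that the factor $k^i$ is absorbed by $1/k^m$ while the dangerous terms shift the index strictly downward, these losses telescope into a convergent bound. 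Concretely, I would fix a majorant $\Phi(t,x)$ for the Taylor data of $F$ via Cauchy's estimates and construct an auxiliary scalar functional equation, solvable by the implicit function theorem and hence possessing a convergent holomorphic solution, whose coefficients dominate those of the formal solution termwise. Verifying this termwise domination — that is, that the derivative loss is genuinely absorbed by the $k^m$-gain supplied by the non-integrality of the $\lambda_j(0)$ — is the technical heart of the argument, while the steps above are essentially formal bookkeeping.
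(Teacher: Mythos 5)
You should first be aware that the paper itself contains no proof of this statement: Theorem \ref{Theorem1} is quoted from G\'erard--Tahara \cite{gt}, so the only meaningful comparison is with the proof in that reference. Your outline follows the same strategy as the original argument: expand $u=\sum_{k\ge1}u_k(x)t^k$, derive the recursion governed by the indicial polynomial $P_k(x)=\prod_{j=1}^m(k-\lambda_j(x))$, get the uniform bound $|P_k(x)|\ge c\,k^m$ on a fixed polydisc (continuity for the finitely many small $k$, asymptotics for large $k$), obtain uniqueness from the uniqueness of the formal coefficients, and prove convergence by majorants. The formal part is correct, including the key use of A${}_2$), A${}_3$): every off-diagonal contribution at level $k$ either carries a factor $t$ or is a product of at least two coefficients of index $\ge1$, hence involves only $u_1,\dots,u_{k-1}$.

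The genuine gap is in the convergence step, which you flag as the technical heart but then only assert. The balance you describe --- shrink the polydisc radius by $\sim r_0/k$ at step $k$, pay $(r'-r)^{-m}$ by Cauchy, absorb it with the gain $|P_k(x)|^{-1}=O(k^{-m})$ --- is the correct accounting for the \emph{linear} terms (the classical Fuchsian mechanism), but it does not close for the nonlinear terms in which several factors each lose the maximal number of derivatives. For $m=2$ the model term is $(\partial^2u/\partial x^2)^2$ --- exactly the term appearing in Remark (3) of Section 2 of this paper --- which at level $k$ produces $\sum_{j_1+j_2=k}(\partial_x^2u_{j_1})(\partial_x^2u_{j_2})$, a total loss of $2m=4$ derivatives against a single gain of order $k^{-2}$; any bookkeeping with radii or Nagumo-type weights depending linearly on the index overshoots here (the derivative cost behaves like $j_1^2j_2^2/k^2$, which for $j_1\approx j_2\approx k/2$ is of size $k^2$), so the claim that ``these losses telescope'' is unjustified as stated. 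Relatedly, the auxiliary scalar equation you propose to solve by the implicit function theorem cannot directly contain derivatives of the unknown majorant --- the implicit function theorem applies only after the derivative loss has been eliminated --- so an additional device of the kind used in \cite{gt} (majorants built from special one-variable series satisfying convexity-type inequalities for products of derivatives, reducing the top-order quadratic terms to a derivative-free functional equation) is indispensable. Supplying that device, i.e.\ actually verifying the termwise domination for the multi-factor derivative-loss terms, is precisely what your proposal leaves open.
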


   In this paper, we consider the uniqueness of the solution of 
(\ref{1.1}) under the assumption
\begin{equation}\label{1.2}
     {\rm Re}\lambda_i(0) < 0, \quad i=1,\ldots,m.
\end{equation}
We denote by ${\mathcal R}(\BC_t \setminus \{0\})$ the universal 
covering space of $\BC_t \setminus \{0\}$. For a nonempty open interval 
$I=(\theta_1,\theta_2)$ and $r>0$ we write 
$S_I=\{t \in {\mathcal R}(\BC \setminus \{0\}) \,;\,
  \theta_1<\arg t <\theta_2 \}$ and $S_I(r)=\{t \in S_I \,;\, 0<|t|<r \}$.
For $R>0$ we write $D_R=\{x \in \BC^n \,;\, |x|<R \}$, where
$|x|=\max_{1 \leq j \leq n}|x_j|$.
\par
   We know:
\par
   (1)(G\'erard-Tahara \cite{gt}). If (\ref{1.2}) is satisfied, and 
if $u(t,x)$ is a holomorphic solution of (\ref{1.1}) on 
$S_I(r_0) \times D_{R_0}$ (for some nonempty open interval $I$, $r_0>0$ 
and $R_0>0$) satisfying
\begin{equation}\label{1.3}
    \sup_{x \in D_{R_0}} |u(t,x)|= O(|t|^a) \quad 
    \mbox{(as $S_I \ni t \longrightarrow 0$)} 
\end{equation}
for some $a>0$, we have $u(t,x)=u_0(t,x)$ on $S_I(r) \times D_R$ for 
some $r>0$ and $R>0$. 
\par
   (2) In Tahara \cite{unique}, this condition (\ref{1.3}) was 
weakened to the following one: there is an $\epsilon >0$ such that
\[
    \sup_{x \in D_{R_0}} |u(t,x)|= O \Bigl( \frac{1}{|\log t|^{\epsilon}}
            \Bigr)
     \quad \mbox{(as $S_I \ni t \longrightarrow 0$)}.
\]

\par
   (3) After that, the author has been trying to prove the following 
conjecture as a working hypothesis.

\bigskip
{\bf Conjecture.}
    Suppose (\ref{1.2}). If $u(t,x)$ is a holomorphic solution 
of (\ref{1.1}) on $S_I(r_0) \times D_{R_0}$ (for some nonempty open 
interval $I$, $r_0>0$ and $R_0>0$) satisfying
\[
    \sup_{x \in D_{R_0}} |u(t,x)|= o(1) \quad 
    \mbox{(as $S_I \ni t \longrightarrow 0$)},
\]
we have $u(t,x)=u_0(t,x)$ on $S_I(r) \times D_R$ for some $r>0$ and 
$R>0$. 
%
%

\medskip
    The purpose of this paper is to show that the above conjecture
is true in the case $m=2$. The case $m=1$ has already been proved in 
Tahara \cite{briot} by a method similar to the Cauchy's characteristics 
method. In this paper, we will modify its argument so that it works 
also in the case $m=2$. Since the argument here works only in the case 
$m=2$, the above conjecture is still open in the case $m \geq 3$.

%
%
%

\section{Main result}\label{section2}
%

   From now, we consider the case $m=2$. Our equation is
\begin{equation}\label{2.1}
    \Bigl(t \frac{\partial}{\partial t} \Bigr)^2 u
     = F \Bigl(t,x,
      \Bigl\{ \Bigl(t \frac{\partial}{\partial t} \Bigr)^i
         \Bigl(\frac{\partial}{\partial x} \Bigr)^\alpha
           u \Bigr\}_{i+|\alpha| \leq 2, i<2} \Bigr).
\end{equation}
Let $\lambda_1(x), \lambda_2(x)$ be the characteristic exponents of 
(\ref{2.1}). We suppose the conditions A${}_1$), A${}_2$), A${}_3$) 
and 
\begin{equation}\label{2.2}
     {\rm Re}\lambda_i(0) < 0, \quad i=1,2. 
\end{equation}
The following result is the main theorem of this paper.

\begin{thm}\label{Theorem2}
    Let $u(t,x)$ be a holomorphic solution of {\rm (\ref{2.1})} 
on $S_I(r_0) \times D_{R_0}$ for some nonempty open interval $I$, 
$r_0>0$ and $R_0>0$. If $u(t,x)$ satisfies
\begin{equation}\label{2.3}
     \varlimsup_{R \to +0}\Bigl[\lim_{r \to +0} \, 
         \Bigl(\frac{1}{R^4}\sup_{S_I(r) \times D_R}|u(t,x)|
     \Bigr) \Bigr] \, = \, 0,
\end{equation}
we have $u(t,x)=u_0(t,x)$ on $S_I(\sigma) \times D_{\delta}$ for 
some $\sigma>0$ and $\delta>0$, where $u_0(t,x)$ is the unique 
holomorphic solution in Theorem {\rm \ref{Theorem1}}.
\end{thm}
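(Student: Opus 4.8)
The plan is to reduce the problem to the known $m=1$ uniqueness result (Tahara \cite{briot}) by a combination of two maneuvers: first diagonalizing the principal part to split the second order operator into two first order Fuchsian factors, and second exploiting the weak growth hypothesis (\ref{2.3}) to bootstrap the a priori bound up to the polynomial rate $O(|t|^a)$ required by item (1) in the Introduction. Let me sketch the approach.

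First I would set $v=u-u_0$, where $u_0$ is the holomorphic solution of Theorem \ref{Theorem1} (which exists since Re$\lambda_i(0)<0$ forces $\lambda_i(0)\notin\BN^*$). Subtracting the two equations and using that $F$ is holomorphic, $v$ satisfies a linear-in-the-top-order equation of the form $(t\partial/\partial t)^2 v = \sum_{i+|\alpha|\le 2,\,i<2} a_{i,\alpha}(t,x)\,(t\partial/\partial t)^i(\partial/\partial x)^\alpha v$, where the coefficients $a_{i,\alpha}(t,x)$ are holomorphic near $(0,0)$ and—by assumptions A${}_2$), A${}_3$)—satisfy $a_{i,\alpha}(0,x)\equiv 0$ whenever $|\alpha|>0$. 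The characteristic exponents of this linearized equation coincide with $\lambda_1(x),\lambda_2(x)$. The goal becomes: if $v$ is holomorphic on $S_I(r_0)\times D_{R_0}$ and satisfies (\ref{2.3}), then $v\equiv 0$ near the origin.

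Next I would factor the operator. Writing $\vartheta=t\partial/\partial t$, the principal part $\vartheta^2-\sum_{i<2}b_i(x)\vartheta^i$ (with $b_i(x)=(\partial F/\partial z_{i,0})(0,x,0)$) factors as $(\vartheta-\lambda_1(x))(\vartheta-\lambda_2(x))$ modulo lower-order and $x$-derivative terms that vanish at $t=0$. Setting $w=(\vartheta-\lambda_2(x))v$, the pair $(v,w)$ then satisfies a \emph{first order} Fuchsian system $\vartheta v = \lambda_2(x)v + w$, $\vartheta w = \lambda_1(x)w + (\text{terms in } v,w \text{ and their } x\text{-derivatives, with coefficients vanishing at } t=0)$. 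The heart of the matter is to show that the scalar $m=1$ argument of \cite{briot}—the Cauchy-characteristics-type method—carries over to this triangular system. Because Re$\lambda_i(0)<0$, each factor is ``contractive'' as $t\to 0$ along $S_I$, so one expects the only solution vanishing at $t=0$ to be the zero solution.

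The role of the peculiar hypothesis (\ref{2.3}) is the crucial bootstrapping device, and I expect this to be the main obstacle. The plan is to treat $R$ as a shrinking radius in $x$ and iterate: on $D_R$ one integrates the characteristic flow of each Fuchsian factor, gaining a factor of $|t|^{\mu}$ (with $\mu>0$ determined by $-\mathrm{Re}\lambda_i(0)$) at each step while paying a price controlled by the Cauchy estimates for $\partial/\partial x$, which costs a power of $1/(R_0-R)$. The $1/R^4$ normalization in (\ref{2.3}) is calibrated precisely so that, after finitely many ($m=2$, hence the power $4$) integration-and-Cauchy-estimate steps, the mere $o(R^4)$ decay upgrades to the polynomial bound $\sup_{x\in D_R}|v(t,x)|=O(|t|^a)$ for some $a>0$ on a smaller polydisk. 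The technical difficulty is to control the interplay between the loss of radius in $x$ and the gain of decay in $t$ uniformly as both $r,R\to 0$, and to close the iteration without the constants blowing up—this is where the restriction to $m=2$ is essential, since the two factors can be handled explicitly but three or more resist the bookkeeping. Once the $O(|t|^a)$ estimate is established, item (1) of the Introduction (G\'erard-Tahara \cite{gt}) applies directly and forces $v=u-u_0\equiv 0$ on some $S_I(\sigma)\times D_\delta$, completing the proof.
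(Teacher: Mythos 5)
Your reduction to the known $O(|t|^a)$ result is the step that fails, and it is precisely the step the paper does \emph{not} take. You propose to ``bootstrap'' the hypothesis (\ref{2.3}) into a polynomial bound $\sup_{x}|v(t,x)|=O(|t|^a)$ by alternating integration along the Fuchsian factors with Cauchy estimates in $x$, and then to invoke item (1) of the Introduction. But integrating a factor $t\partial/\partial t-\lambda$ with ${\rm Re}\,\lambda<0$ gives $\Theta v(t,x)=\int_0^t(\tau/t)^{-\lambda}(\cdots)\,d\tau/\tau$, whose kernel has bounded mass; it reproduces a bound of the same order as the input and gains \emph{no} power of $t$ unless the right-hand side already carries one. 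In equation (\ref{2.1}) after subtracting $u_0$, the dangerous terms --- the ones of the form $b_{i,\alpha}(x,Dw)D_{i,\alpha}w$ and the quadratic terms in second $x$-derivatives --- carry no explicit factor of $t$; their smallness comes only from the smallness of $w$ itself, so your iteration produces no decay rate in $t$, and Cauchy estimates only cost radius without creating $t$-decay. The paper's own history makes this obstruction explicit: \cite{unique} could only weaken $O(|t|^a)$ to $O(1/|\log t|^{\epsilon})$, and the counterexample in Remark (2) (with an exponent of zero real part, $u=-x^2/(4\log t)$) shows that solutions of such equations genuinely can decay at a merely logarithmic rate, so no general mechanism converts $o(1)$-type decay into $O(|t|^a)$ before one already knows $w\equiv 0$. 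A related secondary error: your linearized equation for $v$ does not have coefficients ``holomorphic near $(0,0)$''; the mean-value coefficients depend on $Dv$ and are only defined on the sector, with no a priori boundedness or vanishing at $t=0$ for $|\alpha|>0$, which is why the paper keeps the nonlinear structure explicit ($b_{i,\alpha}(x,Dw)$ with $b_{i,\alpha}(x,0)\equiv 0$, plus quadratic $c_{\alpha,\beta}$-terms).

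What the paper actually does is prove vanishing directly, never passing through an $O(|t|^a)$ estimate. Your factorization instinct is partially on target: the paper does use $\Theta_1=t\partial/\partial t-\lambda_1(0)$ and $\Theta_2=(t\partial/\partial t-\lambda_2(0))\Theta_1$ (constants $\lambda_i(0)$, with the $x$-dependence pushed into coefficients $\beta_i(x)$ vanishing at $x=0$), and it does run a characteristics-type argument in the spirit of \cite{briot}. But the argument lives in the $(t,\rho)$-plane of majorant norms $\|\cdot\|_\rho$: one builds the Lyapunov-type function $q=\varepsilon_{0,0}\phi_{0,0}+\phi_{1,0}+t^{\kappa}\phi_{0,2}+\varepsilon_{0,1}\phi_{0,1}+\varepsilon_{1,1}\phi_{1,1}+(\phi_{0,2})^{3/2}$, derives $(t\partial/\partial t+h)q\leq B(t,\rho)\,\partial q/\partial\rho$, and integrates along solutions of $d\rho/dt=-B(t,\rho)/t$ to force $q\equiv 0$. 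The hypothesis (\ref{2.3}) is used not to gain $t$-decay but to make the absorbed coefficient $A(t,\rho)\leq h$ and to keep the characteristic drift small (Lemmas \ref{Lemma2}, \ref{Lemma5}, \ref{Lemma6}); in particular the exponent $4$ is calibrated so that $\phi_{0,2}=o(R^2)$ tames the quadratic second-derivative nonlinearity through the $(\phi_{0,2})^{3/2}$ component and so that $r^{1/3}=o(R)$ keeps characteristics inside $[0,R)$ until $t=0$ --- not, as you guessed, to pay for ``two integration steps.''
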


   If 
\begin{equation}\label{2.4}
    \sup_{x \in D_R} |u(t,x)|= o(1), \quad 
    \mbox{(as $S_I \ni t \longrightarrow 0$)}
\end{equation}
holds for some $R>0$ we have (\ref{2.3}), and so we have

\begin{cor}\label{Corollary1}
    If $u(t,x)$ satisfies {\rm (\ref{2.4})}, we have 
$u(t,x)=u_0(t,x)$ on $S_I(\sigma) \times D_{\delta}$ for some 
$\sigma>0$ and $\delta>0$.
\end{cor}

   This proves the conjecture posed in \S 1 in the case $m=2$.

\medskip
{\bf Remark.}
    (1) If ${\rm Re}\lambda_1(0)>0$ or 
${\rm Re}\lambda_2(0)>0$ holds, we have many solutions satisfying 
(\ref{2.4}) and so the uniqueness of the solution is not valid. 
See \cite {gt, yamazawa}.
\par
   (2) In the case ${\rm Re}\lambda_1(0)=0$ or ${\rm Re}\lambda_2(0)=0$, 
we have the following counter example: the equation
\[
   \Bigl(t \, \frac{\partial }{\partial t} \Bigr)^2u
   = - \Bigl(t \, \frac{\partial }{\partial t} \Bigr)u
    + \Bigl(\frac{\partial u}{\partial x} \Bigr)^2
     + 8 u \Bigl(\frac{\partial^2 u}{\partial x^2} \Bigr)^2
\]
has a trivial solution $u(t,x) \equiv 0$ and a nontrivial solution
\[
    u(t,x) = \dfrac{-x^2}{4 \log t}
\]
which satisfies (\ref{2.4}). In this case, the characteristic exponents 
are $0$ and $-1$.
\par
   (3) The following example shows that the condition (\ref{2.3}) will 
be reasonable: the equation
\[
     \Bigl(t \frac{\partial}{\partial t} \Bigr)^2u
     = - 3 \Bigl(t \, \frac{\partial }{\partial t} \Bigr)u
      - 2u+ \Bigl(\frac{\partial^2 u}{\partial x^2} \Bigr)^2
\]
has a trivial solution $u(t,x) \equiv 0$ and a nontrivial solution 
$u=x^4/72$.  We note that for $u=x^4/72$ we have
\[
     \varlimsup_{R \to +0}\Bigl[\lim_{r \to +0} \, 
         \Bigl(\frac{1}{R^4}\sup_{S_I(r) \times D_R}|u(t,x)|
     \Bigr) \Bigr] \, = \, \frac{1}{72}.
\]
In this case, the characteristic exponents are $-1$ and $-2$.

%
%
%

\section{Some preparatory discussion}\label{section3}
%

   Before the proof of Theorem \ref{Theorem2}, let us present some 
preparatory discussion.  Let $u(t,x)$ be a holomorphic solution of 
(\ref{2.1}) on $S_I(\sigma_0^*) \times D_{R_0^*}$ (for some 
nonempty open interval $I$, $\sigma_0^*>0$ and $R_0^*>0$) satisfying 
the condition (\ref{2.3}). For simplicity, we set 
\begin{align*}
    &\Lambda=\{(i,\alpha) \in \BN \times \BN^n \,;\, 
             i+|\alpha| \leq 2, i<2 \}, \\
    &N=\# \Lambda \enskip \mbox{(the cardinal of $\Lambda$)}, \\
    &z=\{z_{i,\alpha} \}_{(i,\alpha) \in \Lambda} \quad
      \mbox{and} \quad z'=\{z_{0,\alpha} \}_{|\alpha|=2}.
\end{align*}
For $\nu=\{\nu_{i,\alpha} \}_{(i,\alpha) \in \Lambda} \in \BN^N$ we 
write
\[
    |\nu|= \sum_{(i,\alpha) \in \Lambda} \nu_{i,\alpha}, 
     \quad z^{\nu}= \prod_{(i,\alpha) \in \Lambda}
           (z_{i,\alpha})^{\nu_{i,\alpha}}.
\]
Suppose the conditions A${}_1$), A${}_2$), A${}_3$): then $F(t,x,z)$ is 
a holomorphic function in a neighborhood of the origin of 
$\BC_t \times \BC_x^n \times \BC_z^N$ having the expansion of the form 
\[
    F(t,x,z)= a(x)t + \beta_0^*(x)z_{0,0}+\beta_1^*(x)z_{1,0}
            +R(t,x,z)
\]
with
\[
      R(t,x,z)= \sum_{i+|\nu| \geq 2}\gamma_{i,\nu}^*(x)t^i z^{\nu}.
\]

   Step 1. We set $w(t,x)=u(t,x)-u_0(t,x)$. Then, we have
\begin{equation}\label{3.1}
     \varlimsup_{R \to +0}\Bigl[\lim_{r \to +0} \, 
         \Bigl(\frac{1}{R^4}\sup_{S_I(r) \times D_R}|w(t,x)|
     \Bigr) \Bigr] \, = \, 0,
\end{equation}
and $w(t,x)$ is a holomorphic solution of the equation 
\begin{equation}\label{3.2}
    \Bigl(t \frac{\partial}{\partial t} \Bigr)^2 w
     = H \Bigl(t,x, \Bigl\{ 
        \Bigl(t \frac{\partial}{\partial t} \Bigr)^i
         \Bigl(\frac{\partial}{\partial x} \Bigr)^{\alpha}
         w \Bigr\}_{(i,\alpha) \in \Lambda} \Bigr)
\end{equation}
on $S_I(\sigma_0^*) \times D_{R_0^*}$, where 
\begin{align*}
    H(t,x,z)= &F \bigl(t,x, z+
       \{ u_{0,i,\alpha}(t,x) \}_{(i,\alpha) \in \Lambda}
             \bigr) \\
        &\qquad \qquad \qquad   - F \bigl(t,x,
       \{ u_{0,i,\alpha}(t,x) \}_{(i,\alpha) \in \Lambda}
             \bigr)
\end{align*}
under the notation $u_{0,i,\alpha}(t,x)=(t \partial/\partial t)^i
(\partial/\partial x)^{\alpha}u_0(t,x)$ ($(i,\alpha) \in \Lambda$).
It is easy to see that $H(t,x,z)$ can be expressed in the form
\begin{align*}
    H(t,x,z)= &\beta_0^*(x)z_{0,0}+\beta_1^*(x)z_{1,0} 
          + t \sum_{(i,\alpha) \in \Lambda} 
             a_{i,\alpha}^*(t,x,z) z_{i,\alpha} \\
       &+ \sum_{(i,\alpha) \in \Lambda, |\alpha| \leq 1} 
             b_{i,\alpha}^*(x,z) z_{i,\alpha}
       + \sum_{|\alpha|=|\beta|=2}
          c_{\alpha,\beta}^*(x,z')z_{0,\alpha}z_{0,\beta}
\end{align*}
for some holomorphic functions $\beta_i^*(x)$ ($i=0,1$), 
$a_{i,\alpha}^*(t,x,z)$ ($(i,\alpha) \in \Lambda$),
$b_{i,\alpha}^*(x,z)$ ($(i,\alpha) \in \Lambda, |\alpha| \leq 1$),
and $c_{\alpha,\beta}^*(x,z')$ ($|\alpha|=|\beta|=2$) in a common 
neighborhood of $(0,0,0) \in \BC_t \times \BC_x^n \times \BC_z^N$ 
satisfying $b_{i,\alpha}^*(x,0) \equiv 0$ 
($(i,\alpha) \in \Lambda, |\alpha| \leq 1$). In addition, we see that 
the roots of $\lambda^2-\beta_1^*(x)\lambda-\beta_0^*(x)=0$ are just 
the same as $\lambda_1(x),\lambda_2(x)$.
\par
   From now, we consider the equation (\ref{3.2}). Our purpose is to 
show that $w(t,x) \equiv 0$ holds on $S_I(\sigma) \times D_\delta$ 
for some $\sigma>0$ and $\delta>0$.  

\par
\medskip
   Step 2. We set $\Theta_0=1$ and 
\begin{align*}
    &\Theta_1 = \Bigl( t \frac{\partial}{\partial t} 
              - \lambda_1(0) \Bigr), \\
    &\Theta_2 = 
    \Bigl( t \frac{\partial}{\partial t}- \lambda_2(0) \Bigr)
    \Bigl( t \frac{\partial}{\partial t}- \lambda_1(0) \Bigr).
\end{align*}
We write $Dw=\{D_{i,\alpha}w \}_{(i,\alpha) \in \Lambda}$ and 
$D'w=\{D_{0,\alpha}w \}_{|\alpha|=2}$ with
\[
    D_{i,\alpha}w= \Theta_i
         \Bigl(\frac{\partial}{\partial x} \Bigr)^\alpha w,
      \quad (i,\alpha) \in \Lambda.
\]
Then, (\ref{3.2}) is expressed in the form
\begin{align}
    \Theta_2 w &= \beta_0(x)w + \beta_1(x) \Theta_1w
      + t \sum_{(i,\alpha) \in \Lambda}
       a_{i,\alpha}(t,x,Dw) \Theta_i
       \Bigl(\frac{\partial}{\partial x} \Bigr)^\alpha w
                  \label{3.3} \\
      &+ \sum_{(i,\alpha) \in \Lambda, |\alpha| \leq 1}
       b_{i,\alpha}(x,Dw) \Theta_i
       \Bigl(\frac{\partial}{\partial x} \Bigr)^\alpha w \notag \\
      &+ \sum_{|\alpha|=|\beta|=2}
          c_{\alpha,\beta}(x,D'w) 
            \Bigl[\Bigl(\frac{\partial}{\partial x}
           \Bigr)^\alpha w \Bigr] \times 
            \Bigl[\Bigl(\frac{\partial}{\partial x} 
                    \Bigr)^\beta w \Bigr] \notag
\end{align}
for some holomorphic functions $\beta_i(x)$ ($i=0,1$), 
$a_{i,\alpha}(t,x,z)$ ($(i,\alpha) \in \Lambda$),
$b_{i,\alpha}(x,z)$ ($(i,\alpha) \in \Lambda, |\alpha| \leq 1$) and 
$c_{\alpha,\beta}(x,z')$ ($|\alpha|=|\beta|=2$) in a common 
neighborhood of $(0,0,0) \in \BC_t \times \BC_x^n \times \BC_z^N$ 
satisfying $\beta_i(0)=0$ ($i=0,1$) and $b_{i,\alpha}(x,0) \equiv 0$ 
($(i,\alpha) \in \Lambda, |\alpha| \leq 1$).

\par
\medskip
   Step 3.  By a rotation, we may suppose that $0 \in I$ holds: 
then we have $(0,\sigma_0^*) \subset S_I(\sigma_0^*)$. From now, we 
consider the equation (\ref{3.3}) only on 
$(0,\sigma_0^*) \times D_{R_0^*}$. By (\ref{3.1}) and Nagumo type 
lemma in a sector (for example, see Lemma 4.2 in \cite{bacani}) we 
have the condition
\begin{equation}\label{3.4}
     \varlimsup_{R \to +0}\Bigl[\lim_{\sigma \to +0} \, 
         \Bigl(\frac{1}{R^4}\sup_{(0,\sigma] \times D_R}
     \Bigl|\Bigl(t \frac{\partial }{\partial t} \Bigr)^iw(t,x) \Bigr|
     \Bigr) \Bigr] \, = \, 0, \quad i=0,1,2.
\end{equation}
\par
    For a formal power series 
$f(t,x) = \sum_{|\alpha| \geq 0} f_{\alpha}(t)x^\alpha$ with
coefficients in $C^0((0,T))$ we write
\[
    \| f(t) \|_{\rho} = \sum_{|\alpha| \geq 0} 
    |f_{\alpha}(t)| \frac{\alpha!}{|\alpha|!}\rho^{|\alpha|}
\]
(which is a formal power series in $\rho$ with coefficients
in $C^0((0,T))$). When $\| f(t) \|_{\rho}$ converges, we regard it 
as a function in $(t,\rho)$.  We can easily see:
\begin{align*}
   &\| f(t)g(t) \|_{\rho} \ll \| f(t) \|_{\rho}\| g(t) \|_{\rho}, \\
   &\| (\partial/\partial x_j)f(t) \|_{\rho}
     \ll (\partial/\partial \rho)\|f(t)\|_{\rho}, \quad j=1,\ldots,n.
\end{align*}
where $\sum_{i \geq 0}a_i \rho^i \ll \sum_{i \geq 0}b_i \rho^i$ means 
that $|a_i| \leq b_i$ holds for all $i \geq 0$.
For a holomorphic function 
$f(t,x,z)=\sum_{i+|\nu| \geq 0}f_{i,\nu}(x)t^iz^{\nu}$ we write
\[
     \|f\|_{\rho}(t,z)
     =\sum_{i+|\nu| \geq 0}\|f_{i,\nu}\|_{\rho}t^iz^{\nu}.
\]
\par
   By (\ref{3.4}) we have

\begin{lem}\label{Lemma1}
    Under the above situation, for $i=0,1,2$ we have
\begin{equation}\label{3.5}
     \lim_{\sigma \to +0} \, 
           \Bigl(\sup_{(0,\sigma] \times [0,R]} 
       \Bigl\| \Bigl(t \frac{\partial}{\partial t} \Bigr)^iw(t)
        \Bigr\|_{\rho} \Bigr) = o(R^4) \quad 
      \mbox{{\rm (}as $R \longrightarrow +0${\rm )}}.
\end{equation}
\end{lem}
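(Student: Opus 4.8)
The plan is to deduce (\ref{3.5}) from the pointwise bound (\ref{3.4}) by comparing the formal norm $\|\cdot\|_\rho$ with the ordinary supremum norm on a polydisk, the bridge being Cauchy's inequalities. Fix $i \in \{0,1,2\}$ and write $g(t,x) = (t\partial/\partial t)^i w(t,x) = \sum_\alpha g_\alpha(t) x^\alpha$. The first observation is that, since $\|g(t)\|_\rho = \sum_\alpha |g_\alpha(t)| (\alpha!/|\alpha|!)\rho^{|\alpha|}$ is a power series in $\rho$ with \emph{nonnegative} coefficients, it is nondecreasing in $\rho \geq 0$; hence $\sup_{(0,\sigma] \times [0,R]} \|g(t)\|_\rho = \sup_{t \in (0,\sigma]} \|g(t)\|_R$, and it suffices to estimate $\|g(t)\|_R$ itself.

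Next I would fix a ratio, say $R' = 2R$ (any $R' = R/q$ with $0<q<1$ works), and take $R$ small enough that $2R < R_0^*$. Cauchy's inequality on the polydisk $D_{2R}$ gives $|g_\alpha(t)| \leq (2R)^{-|\alpha|} \sup_{x \in D_{2R}} |g(t,x)|$, so that
\[
    \|g(t)\|_R \leq \Bigl( \sup_{x \in D_{2R}} |g(t,x)| \Bigr) \sum_\alpha \frac{\alpha!}{|\alpha|!} \Bigl( \frac{1}{2} \Bigr)^{|\alpha|}.
\]
The remaining point is the convergence of the numerical series on the right. Grouping by $k = |\alpha|$ and using $\#\{\alpha \in \BN^n : |\alpha| = k\} = \binom{k+n-1}{n-1} \leq (k+1)^{n-1}$ together with the trivial bound $\alpha!/|\alpha|! \leq 1$, one gets $\sum_{|\alpha|=k} \alpha!/|\alpha|! \leq (k+1)^{n-1}$, whence $\sum_\alpha (\alpha!/|\alpha|!) 2^{-|\alpha|} \leq \sum_{k \geq 0} (k+1)^{n-1} 2^{-k} =: C_n < \infty$. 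Thus $\|g(t)\|_R \leq C_n \sup_{x \in D_{2R}} |g(t,x)|$ for every $t \in (0,\sigma]$.

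Finally I would take the supremum over $t \in (0,\sigma]$ and let $\sigma \to +0$. Writing $M_i(R) = \lim_{\sigma \to +0} \sup_{(0,\sigma] \times D_R} |(t\partial/\partial t)^i w(t,x)|$ (a decreasing limit, hence existing), the estimate above yields
\[
    \lim_{\sigma \to +0} \sup_{(0,\sigma] \times [0,R]} \Bigl\| \Bigl( t \frac{\partial}{\partial t} \Bigr)^i w(t) \Bigr\|_\rho \leq C_n \, M_i(2R).
\]
By (\ref{3.4}) we have $M_i(R) = o(R^4)$ as $R \to +0$; since $C_n$ is a fixed dimensional constant and $(2R)^4 = 16 R^4$, it follows that $C_n M_i(2R) = o(R^4)$ as well, which is exactly the assertion (\ref{3.5}). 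The only genuinely technical ingredient is the combinatorial estimate for $\sum_{|\alpha|=k} \alpha!/|\alpha|!$, and this is the step I expect to warrant the most care; but as shown it is harmless, since even the crude bound $(k+1)^{n-1}$ guarantees convergence at the subunit radius $q = 1/2$. The essential idea is simply that passing from the sup norm to the formal $\rho$-norm costs only a fixed constant, provided one evaluates the sup norm on a slightly larger polydisk, and enlarging $R$ to $2R$ is absorbed harmlessly by the $o(R^4)$ decay.
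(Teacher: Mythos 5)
Your proof is correct: the monotonicity of $\|\cdot\|_\rho$ in $\rho$ (nonnegative coefficients), Cauchy's inequalities on the doubled polydisk $D_{2R}$, and the convergent majorant $\sum_{k \geq 0}(k+1)^{n-1}2^{-k}$ (valid since $\alpha!/|\alpha|! \leq 1$ and $\#\{|\alpha|=k\} = \binom{k+n-1}{n-1} \leq (k+1)^{n-1}$) give exactly the comparison $\|g(t)\|_{R} \leq C_n \sup_{x \in D_{2R}}|g(t,x)|$, after which (\ref{3.4}) and $(2R)^4 = 16R^4$ yield (\ref{3.5}). The paper states Lemma \ref{Lemma1} without proof as an immediate consequence of (\ref{3.4}), and your argument is precisely the standard sup-norm-to-majorant-norm step that is being left implicit there.
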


\par
\medskip
   Step 4. Set $J=\{(i,j) \in \BN^2 \,;\, i+j \leq 2, i<2 \}$: 
actually, we have
\[
    J=\{(0,0), (1,0), (0,1), (1,1), (0,2)\}.
\]
For $(i,j) \in J$ we set
\begin{align*}
    &\phi_{0,0}(t,\rho) 
        = \int_{0}^{t} \, \Bigl( \dfrac{\tau}{t} \Bigr)
            ^{-{\rm Re}\lambda_1(0)} \| \Theta_1w(\tau) 
             \|_{\rho} \dfrac{d\tau}{\tau}, \\
    &\phi_{1,0}(t,\rho) 
        = \int_{0}^{t} \, \Bigl( \dfrac{\tau}{t} \Bigr)
            ^{-{\rm Re}\lambda_2(0)} \| \Theta_2w(\tau) 
             \|_{\rho} \dfrac{d\tau}{\tau}, \\
    &\phi_{0,1}(t,\rho)
         = \frac{\partial}{\partial \rho}\phi_{0,0}(t,\rho), \\
    &\phi_{1,1}(t,\rho)
               = \frac{\partial}{\partial \rho}\phi_{1,0}(t,\rho),\\
    &\phi_{0,2}(t,\rho)
            = \frac{\partial}{\partial \rho}\phi_{0,1}(t,\rho)
       = \Bigl(\frac{\partial}{\partial \rho} \Bigr)^2 
              \phi_{0,0}(t,\rho).
\end{align*}
By making $\sigma_0^*>0$ and $R_0^*>0$ smaller if necessary, we may 
suppose that $\phi_{i,j}(t,\rho)$ ($(i,j) \in J$) are convergent on 
$(0,\sigma_0^*] \times [0,R_0^*]$.  By (\ref{3.5}) we have

\begin{lem}\label{Lemma2}
    {\rm (1)} For any $(i,j) \in J$ we have
\[
     \lim_{\sigma \to +0} \, 
     \Bigl(\sup_{(0,\sigma] \times [0,R]} \phi_{i,j}(t,\rho)
      \Bigr) = o(R^{4-j}) \quad 
      \mbox{{\rm (}as $R \longrightarrow +0${\rm )}}.
\]
\par
   {\rm (2)} For any $\epsilon >0$ we can find $\sigma>0$ and $R>0$ 
such that $|\phi_{i,j}(t,\rho)| \leq \epsilon$ holds on 
$(0,\sigma] \times [0,R]$ for any $(i,j) \in J$.
\end{lem}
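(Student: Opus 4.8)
The plan is to prove part (1) first, splitting according to the value of $j$, and then to obtain part (2) as a direct consequence. Throughout, set $c_1=-{\rm Re}\,\lambda_1(0)>0$ and $c_2=-{\rm Re}\,\lambda_2(0)>0$, which are positive by (\ref{2.2}).

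\emph{The cases $j=0$.} For $(i,j)=(0,0),(1,0)$ I use that the integral kernel has total mass $\int_0^t(\tau/t)^{c_k}\,d\tau/\tau=1/c_k$. Since for each fixed $\rho\ge0$ the integrand $\|\Theta_1 w(\tau)\|_\rho$ is a nonnegative function of $\tau$, I may bound it by its supremum and obtain, for each $\rho$,
\[
\phi_{0,0}(t,\rho)\le\frac{1}{c_1}\sup_{\tau\in(0,t]}\|\Theta_1 w(\tau)\|_\rho,
\]
and likewise $\phi_{1,0}(t,\rho)\le(1/c_2)\sup_{\tau\in(0,t]}\|\Theta_2 w(\tau)\|_\rho$. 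As $\Theta_1 w$ and $\Theta_2 w$ are constant-coefficient linear combinations of the $(t\,\partial/\partial t)^iw$ with $i\le 2$, subadditivity of $\|\cdot\|_\rho$ and Lemma \ref{Lemma1} give $\lim_{\sigma\to+0}\sup_{(0,\sigma]\times[0,R]}\|\Theta_k w\|_\rho=o(R^4)$; hence the two quantities $\lim_{\sigma\to+0}\sup_{(0,\sigma]\times[0,R]}\phi_{i,0}$ are $o(R^4)=o(R^{4-0})$.

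\emph{The cases $j\ge1$ via a Cauchy estimate.} Each $\phi_{i,0}(t,\cdot)$ is a power series in $\rho$ with nonnegative coefficients (a positive integral of such a series), and $\phi_{0,1},\phi_{1,1},\phi_{0,2}$ are its $\rho$-derivatives. The elementary fact I will exploit is that for any $g(\rho)=\sum_{k\ge0}g_k\rho^k$ with $g_k\ge0$ one has, on comparing the radii $R$ and $2R$,
\[
g'(R)\le\frac{1}{2R}\,g(2R),\qquad g''(R)\le\frac{3}{4R^2}\,g(2R),
\]
which follows from $g_kR^k=2^{-k}g_k(2R)^k$ together with $\sup_{k}k2^{-k}=1/2$ and $\sup_{k}k(k-1)2^{-k}=3/4$. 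Since each $\phi_{i,j}(t,\cdot)$ is increasing in $\rho$, its supremum over $[0,R]$ is attained at $\rho=R$; applying the estimate (valid once $2R\le R_0^*$, so that $\phi_{i,0}(t,2R)$ is defined) with $g=\phi_{0,0}(t,\cdot)$ gives $\phi_{0,1}(t,R)\le(1/2R)\phi_{0,0}(t,2R)$ and $\phi_{0,2}(t,R)\le(3/4R^2)\phi_{0,0}(t,2R)$, and with $g=\phi_{1,0}(t,\cdot)$ gives $\phi_{1,1}(t,R)\le(1/2R)\phi_{1,0}(t,2R)$. Taking $\sup_{t\in(0,\sigma]}$ and then $\sigma\to+0$, the case $j=0$ yields
\[
\lim_{\sigma\to+0}\sup_{(0,\sigma]\times[0,R]}\phi_{0,1}\le\frac{1}{2R}\,o\bigl((2R)^4\bigr)=o(R^3),
\]
and likewise $o(R^3)$ for $\phi_{1,1}$ and $o(R^2)$ for $\phi_{0,2}$, which is exactly $o(R^{4-j})$ in each case. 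This completes part (1).

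\emph{Part (2) and the main difficulty.} Part (2) follows at once: since $4-j\ge2>0$ for every $(i,j)\in J$, each limit in part (1) tends to $0$ as $R\to+0$. Given $\epsilon>0$, I first fix $R>0$ so small that $\lim_{\sigma\to+0}\sup_{(0,\sigma]\times[0,R]}\phi_{i,j}<\epsilon$ for all five $(i,j)\in J$, and then take $\sigma>0$ to be the least of the finitely many resulting thresholds, so that $\sup_{(0,\sigma]\times[0,R]}\phi_{i,j}\le\epsilon$ holds simultaneously; as each $\phi_{i,j}\ge0$, this is the desired bound on $|\phi_{i,j}|$. The only genuinely delicate point in the whole argument is the bookkeeping of the powers of $R$: each differentiation in $\rho$ must cost precisely one power of $R$, so that the $o(R^4)$ coming from Lemma \ref{Lemma1} is converted into $o(R^{4-j})$ and nothing weaker. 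The Cauchy estimate above is exactly what secures this one-for-one exchange, and the point to check with care is that nonnegativity of all coefficients is preserved under both the $\tau$-integration and $\partial/\partial\rho$, so that the estimate is legitimately applicable.
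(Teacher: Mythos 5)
Your proof is correct, and it takes the route the paper evidently intends: the paper offers no argument for Lemma~\ref{Lemma2} beyond the words ``By (\ref{3.5}) we have,'' and your two ingredients --- the finite kernel mass $\int_0^t(\tau/t)^{c_k}\,d\tau/\tau=1/c_k$ (available precisely because (\ref{2.2}) makes $c_k>0$) for the $j=0$ cases, plus the radius-doubling Cauchy estimate for power series with nonnegative coefficients to trade each $\rho$-derivative for exactly one power of $R$ --- supply the missing justification. The constants $\sup_k k2^{-k}=1/2$ and $\sup_k k(k-1)2^{-k}=3/4$ check out, the monotonicity in $\rho$ legitimately reduces the supremum to $\rho=R$ (with $2R\le R_0^*$ as you note), and part (2) indeed follows from part (1) since $4-j\ge 2>0$ for all $(i,j)\in J$.
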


\par
\medskip
    Step 5. By (\ref{2.2}) we can take an $h>0$ such that
\[
     {\rm Re}\lambda_i(0) < -2h, \quad i=1,2. 
\]

\begin{lem}\label{Lemma3}
   {\rm (1)} For any $(i,\alpha) \in \Lambda$ we have
$$
    \|D_{i,\alpha}w(t) \|_{\rho}
    = \Bigl\| \Theta_i \Bigl(\frac{\partial}{\partial x}
            \Bigr)^{\alpha} w(t) \Bigr\|_{\rho}
    \ll \phi_{i,|\alpha|}(t,\rho) \quad \mbox{on $(0,\sigma_0^*]$}.
$$
\par
   {\rm (2)} We set 
$\Phi=\Phi(t,\rho)
    =(\phi_{i,|\alpha|}(t,\rho) \,;\, (i,\alpha) \in \Lambda)$
and $\Phi'=\Phi'(t,\rho)
    =(\phi_{0,|\alpha|}(t,\rho) \,;\, |\alpha|=2)$: 
we have
\begin{align*}
    &\Bigl(t \frac{\partial}{\partial t} + 2h \Bigr) 
      \phi_{0,0}(t,\rho) \ll \phi_{1,0}(t,\rho), \\
    &\Bigl(t \frac{\partial}{\partial t} + 2h \Bigr) 
      \phi_{1,0}(t,\rho) \\
    &\qquad \ll \|\beta_0\|_{\rho} \phi_{0,0}(t,\rho)
               + \|\beta_1\|_{\rho} \phi_{1,0}(t,\rho) 
       + t \sum_{(i,\alpha) \in \Lambda} 
       \|a_{i,\alpha}\|_{\rho}(t,\Phi) \phi_{i,|\alpha|}(t,\rho) \\
    &\qquad + \!\!\! \sum_{(i,\alpha) \in \Lambda, |\alpha| \leq 1}
       \!\!\! \|b_{i,\alpha}\|_{\rho}(\Phi) \phi_{i,|\alpha|}(t,\rho)
     +  \sum_{|\alpha|=|\beta|=2}
    \|c_{\alpha, \beta} \|_{\rho}(\Phi')(\phi_{0,2}(t,\rho))^2
\end{align*}
on $(0,\sigma_0^*]$.
\par
   {\rm (3)} By taking $R_0^*>0$ a smaller one if necessary, we may 
assume that $\|\beta_i\|_{\rho}$ {\rm (}$i=0,1${\rm )}, 
$\|a_{i,\alpha}\|_{\rho}(t,z)$
{\rm (}$(i,\alpha) \in \Lambda${\rm )}, $\|b_{i,\alpha}\|_{\rho}(z)$ 
{\rm (}$(i,\alpha) \in \Lambda, |\alpha| \leq 1${\rm )} and 
$\|c_{\alpha,\beta} \|_{\rho}(z')$ 
are convergent on $(0,\sigma_0^*] \times [0,R_0^*] \times D_L$
{\rm (}where $D_L= \{z \in \BC^N \,;\, |z_{i,\alpha}|<L \,\, 
              ((i,\alpha) \in \Lambda) \}${\rm )} for some $L>0$. 
In addition, we have 
\begin{align*}
    &\|\beta_i\|_{\rho} \leq H_i \rho \quad \mbox{on $[0,R_0^*]$},
            \quad i=0,1, \\
    &\|b_{i,\alpha}\|_{\rho}(z) \leq B_{i,\alpha}|z|
           \quad \mbox{on $[0,R_0^*] \times D_L$}, \quad 
             (i,\alpha) \in \Lambda, \, |\alpha| \leq 1
\end{align*}
for some constants $H_i>0$ {\rm (}$i=0,1${\rm )} and $B_{i,\alpha}>0$ 
{\rm (}$(i,\alpha) \in \Lambda, |\alpha| \leq 1${\rm )}, 
where $|z|=\sum_{(i,\alpha) \in \Lambda}|z_{i,\alpha}|$.
\par
   {\rm (4)} For any $(i,\alpha) \in \Lambda$ with $|\alpha| \leq 1$
we have
\[
     \lim_{\sigma \to +0} \, 
           \Bigl(\sup_{(0,\sigma] \times [0,R]} 
       \frac{\partial \|b_{i,\alpha}\|_{\rho}(\Phi)}
                                 {\partial \rho} \Bigr) = o(R)
       \quad \mbox{{\rm (}as $R \longrightarrow +0${\rm )}}.
\]
\end{lem}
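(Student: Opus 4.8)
The plan is to derive all four assertions from a single device: solving, in the variable $t$, the first order Fuchsian ordinary differential equations that $w$ and $\Theta_1w$ satisfy, and then passing to the majorant norms $\|\cdot\|_\rho$. For (1), note that $\Theta_1w=(t\partial/\partial t-\lambda_1(0))w$ is linear of first order in $t$, so with integrating factor $t^{-\lambda_1(0)}$ its solution is
\[
   w(t) = \int_0^t \Bigl(\frac{\tau}{t}\Bigr)^{-\lambda_1(0)}
          (\Theta_1w)(\tau)\,\frac{d\tau}{\tau}.
\]
The essential point is that the homogeneous contribution $C\,t^{\lambda_1(0)}$ is forced to vanish: since ${\rm Re}\,\lambda_1(0)<0$ the factor $t^{-\lambda_1(0)}$ blows up as $t\to+0$, whereas $t^{-\lambda_1(0)}w(t)\to0$ by the boundedness coming from (\ref{3.4})--(\ref{3.5}), so $C=0$. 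Taking $\|\cdot\|_\rho$ and using $|(\tau/t)^{-\lambda_1(0)}|=(\tau/t)^{-{\rm Re}\lambda_1(0)}$ gives $\|w\|_\rho\ll\phi_{0,0}$; the same argument applied to $\Theta_2w=(t\partial/\partial t-\lambda_2(0))(\Theta_1w)$ gives $\|\Theta_1w\|_\rho\ll\phi_{1,0}$. Since $\Theta_i$ commutes with $\partial/\partial x$, the general case follows by applying $(\partial/\partial x)^\alpha$ and invoking $\|(\partial/\partial x_j)f\|_\rho\ll(\partial/\partial\rho)\|f\|_\rho$ exactly $|\alpha|$ times, which turns $\phi_{0,0},\phi_{1,0}$ into $\phi_{0,|\alpha|},\phi_{1,|\alpha|}$.

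For (2) I would differentiate the definitions directly. Writing $\phi_{0,0}=t^{{\rm Re}\lambda_1(0)}\int_0^t\tau^{-{\rm Re}\lambda_1(0)}\|\Theta_1w\|_\rho\,d\tau/\tau$ one finds $(t\partial/\partial t-{\rm Re}\lambda_1(0))\phi_{0,0}=\|\Theta_1w\|_\rho$, hence $(t\partial/\partial t+2h)\phi_{0,0}=({\rm Re}\lambda_1(0)+2h)\phi_{0,0}+\|\Theta_1w\|_\rho$. Because ${\rm Re}\lambda_1(0)+2h<0$ and $\phi_{0,0}$ has nonnegative coefficients, the first summand drops under $\ll$, leaving $(t\partial/\partial t+2h)\phi_{0,0}\ll\|\Theta_1w\|_\rho\ll\phi_{1,0}$ by (1). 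The identical computation for $\phi_{1,0}$ gives $(t\partial/\partial t+2h)\phi_{1,0}\ll\|\Theta_2w\|_\rho$, and it remains to majorize $\|\Theta_2w\|_\rho$ by applying $\|\cdot\|_\rho$ to the right-hand side of (\ref{3.3}). The one nonroutine ingredient is a substitution rule: expanding $a_{i,\alpha}(t,x,z)=\sum a_{k,\nu}(x)t^kz^\nu$ and combining the product rule with (1) yields $\|a_{i,\alpha}(t,x,Dw)\|_\rho\ll\|a_{i,\alpha}\|_\rho(t,\Phi)$, and likewise for $b_{i,\alpha}$ and $c_{\alpha,\beta}$. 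Together with $\|\Theta_i(\partial/\partial x)^\alpha w\|_\rho\ll\phi_{i,|\alpha|}$ and $\|(\partial/\partial x)^\alpha w\|_\rho\ll\phi_{0,2}$ for $|\alpha|=2$, this reproduces exactly the stated majorant, the quadratic $c$-terms contributing $(\phi_{0,2})^2$.

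Part (3) is structural: the reduction in Steps 1--2 produces $\beta_i(0)=0$ and $b_{i,\alpha}(x,0)\equiv0$, so $\|\beta_i\|_\rho$ and $\|b_{i,\alpha}\|_\rho(z)$ are convergent series with no $\rho^0$, respectively $z^0$, term; on the compact sets $[0,R_0^*]$ and $[0,R_0^*]\times D_L$ the quotients $\|\beta_i\|_\rho/\rho$ and $\|b_{i,\alpha}\|_\rho(z)/|z|$ are bounded, giving the linear estimates after shrinking $R_0^*$ and $L$. Part (4) is the delicate one. Differentiating the composite $\|b_{i,\alpha}\|_\rho(\Phi)$ in $\rho$ by the chain rule gives an explicit term $(\partial_\rho\|b_{i,\alpha}\|_\rho)(\Phi)$, which is $O(|\Phi|)=o(R^2)$ since $b_{i,\alpha}(x,0)\equiv0$, plus $\sum_{(j,\gamma)\in\Lambda}(\partial_{z_{j,\gamma}}\|b_{i,\alpha}\|_\rho)(\Phi)\,\partial_\rho\phi_{j,|\gamma|}$, where each coefficient is bounded and $\partial_\rho\phi_{j,|\gamma|}=\phi_{j,|\gamma|+1}$. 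By Lemma \ref{Lemma2}(1) and its evident extension to $\phi_{0,3},\phi_{1,2}$, the largest contribution is from $|\gamma|=2$, namely $\partial_\rho\phi_{0,2}=\phi_{0,3}=o(R)$, all other terms being $o(R^2)$; hence the whole quantity is $o(R)$.

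I expect the main obstacle to be precisely this borderline bookkeeping in (4) together with the substitution rule in (2): one must track the exact power of $R$ lost under each $\partial/\partial\rho$, which is one power per $x$-derivative, and it is the quartic weight $R^4$ in (\ref{2.3})--(\ref{3.1}) that makes the critical term $\phi_{0,3}=o(R)$ emerge with a genuinely positive power of $R$ to spare. A lower weight would cause the argument to fail exactly at this step, so everything hinges on coupling the integral representation of (1) with the sharp orders recorded in Lemma \ref{Lemma2}.
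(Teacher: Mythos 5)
Your proposal is correct and follows essentially the same route as the paper's own proof: integrating the factorized relations $(t\,\partial/\partial t-\lambda_{i+1}(0))\Theta_i w=\Theta_{i+1}w$ to the integral representation (with the homogeneous term killed by boundedness and $\mathrm{Re}\,\lambda_{i+1}(0)<0$), passing to $\|\cdot\|_\rho$ and $\partial/\partial\rho$ for (1)--(2) via the substitution rule $\|a_{i,\alpha}(t,x,Dw)\|_\rho\ll\|a_{i,\alpha}\|_\rho(t,\Phi)$, and obtaining (4) from $\|b_{i,\alpha}\|_\rho(\Phi)\leq B_{i,\alpha}|\Phi|=o(R^2)$ at the cost of one power of $R$ per $\rho$-derivative, your chain-rule bookkeeping with $\phi_{0,3},\phi_{1,2}$ being merely a more explicit rendering of the paper's compressed argument. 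One cosmetic slip in (1): since $\mathrm{Re}\,\lambda_1(0)<0$ it is $t^{\lambda_1(0)}$, not $t^{-\lambda_1(0)}$, that blows up as $t\to+0$; your operative claim $t^{-\lambda_1(0)}w(t)\to0$ is nonetheless exactly what forces $C=0$, so the proof stands.
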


\begin{proof}
    Let us show (1). Since 
\[
    \Bigl(t \frac{\partial}{\partial t} 
          - \lambda_{i+1}(0) \Bigr) \Theta_iw = (\Theta_{i+1}w) (t,x),
    \quad i=0,1
\]
hold, by integrating this we have
\[
     \Theta_iw(t,x)
     = \int_0^t \, \Bigl( \dfrac{\tau}{t} \Bigr)
            ^{- \lambda_{i+1}(0)} (\Theta_{i+1} w)(\tau,x) 
            \dfrac{d\tau}{\tau}
\]
and so by taking the norm 
\[
      \|\Theta_i w(t) \|_{\rho}
     \ll \int_0^t 
         \Bigl( \dfrac{\tau}{t} \Bigr)^{- {\rm Re}\lambda_{i+1}(0)}
         \|\Theta_{i+1} w(\tau)\| \dfrac{d\tau}{\tau}
         = \phi_{i,0}(t,\rho), \quad i=0,1.
\]
Hence, for any $(i,\alpha) \in \Lambda$ we have
\[
    \Bigl\|\Theta_i \Bigl(\dfrac{\partial}{\partial x}
           \Bigr)^{\alpha}w(t) \Bigr\|_{\rho}
    \ll \Bigl(\frac{\partial}{\partial \rho} \Bigr)^{|\alpha|}
             \|\Theta_i w(t) \|_{\rho}
    \ll \Bigl(\frac{\partial}{\partial \rho} \Bigr)^{|\alpha|}
         \phi_{i,0}(t,\rho)
      = \phi_{i,|\alpha|}(t,\rho).
\]
\par
   Let us show (2). We have
\begin{align*}
   \Bigl(t \frac{\partial}{\partial t} + 2h \Bigr) 
          \phi_{0,0}(t,\rho) 
   &\ll \Bigl(t \frac{\partial}{\partial t} + {\rm Re}\lambda_1(0)
      \Bigr) \phi_{0,0}(t,\rho) \\
   &= \| \Theta_1w(\tau) \|_{\rho} \ll \phi_{1,0}(t,\rho) 
                   \quad \mbox{on $(0,\sigma_0^*]$}.
\end{align*}
This proves the first inequality of (2). Since 
\[
   \Bigl(t \frac{\partial}{\partial t} + 2h \Bigr) 
          \phi_{1,0}(t,\rho) 
   \ll \Bigl(t \frac{\partial}{\partial t} + {\rm Re}\lambda_2(0)
      \Bigr) \phi_{1,0}(t,\rho) 
   = \| \Theta_2w(\tau) \|_{\rho},
\]
by applying (\ref{3.3}) and by using (1) we can easily see the second 
inequality of (2). The condition (3) is clear. Since 
$\|b_{j,\alpha}\|_{\rho}(\Phi) \leq B_{j,\alpha}|\Phi|$ 
($(j,\alpha) \in \Lambda$) hold, by (1) of Lemma 2 we have
\[
     \lim_{\sigma \to +0} \, 
           \Bigl(\sup_{(0,\sigma] \times [0,R]} 
       \|b_{i,\alpha}\|_{\rho}(\Phi) \Bigr) = o(R^2)
       \quad \mbox{(as $R \longrightarrow +0$)}.
\]
This leads us to the condition (4).
\end{proof}

   Step 6.  Let $\varepsilon_{0,0}>0$, $\varepsilon_{1,0}=1$, 
$\varepsilon_{0,1}>0$, $\varepsilon_{1,1}>0$ and $0<\kappa<1$. We set
\begin{align*}
    q(t,\rho)= &\varepsilon_{0,0} \phi_{0,0}(t,\rho)
      + \phi_{1,0}(t,\rho)
    + t^\kappa \phi_{0,2}(t,\rho) \\
          & + \varepsilon_{0,1} \phi_{0,1}(t,\rho)
                +\varepsilon_{1,1} \phi_{1,1}(t,\rho) 
          + ( \phi_{0,2}(t,\rho))^{3/2}.
\end{align*}

\begin{lem}\label{Lemma4}
    We have the following inequality:
\begin{equation}\label{3.6}
   \Bigl(t \frac{\partial}{\partial t} + 2h \Bigr) q
    \leq A(t,\rho)\, q + 
         B(t,\rho)\frac{\partial}{\partial \rho} q
\end{equation}
on $(0,\sigma_0^*] \times [0,R_0^*]$, where 
\begin{align*}
    A(t,\rho) &= \varepsilon_{0,0}
           + \Bigl( \frac{1}{\varepsilon_{0,0}}\|\beta_0\|_{\rho}
                    + \|\beta_1\|_{\rho} \Bigr) \\
    &+ \sum_{(i,\alpha) \in \Lambda, |\alpha| \leq 1}
           \frac{1}{\varepsilon_{i,|\alpha|}} \times 
           t\, \|a_{i,\alpha}\|_{\rho}(t,\Phi)
      + \sum_{|\alpha|=2}t^{1-\kappa}\|a_{0,\alpha}\|_{\rho}(t,\Phi) \\
    &+ \sum_{(i,\alpha) \in \Lambda, |\alpha| \leq 1}
       \frac{1}{\varepsilon_{i,|\alpha|}} \times 
          \|b_{i,\alpha}\|_{\rho}(\Phi)
         + \sum_{|\alpha|=|\beta|=2}
         \|c_{\alpha,\beta} \|_{\rho}(\Phi')(\phi_{0,2})^{1/2}  \\
    &+ \kappa 
               + \frac{\varepsilon_{0,1}}{\varepsilon_{1,1}}
      +  \varepsilon_{1,1} \Bigl(\frac{1}{\varepsilon_{0,0}}
       \frac{\partial \|\beta_0\|_{\rho}}{\partial \rho} 
    + \frac{\partial \|\beta_1\|_{\rho}}{\partial \rho} \Bigr) \\
    &+ \varepsilon_{1,1} \Bigl(
        \frac{\|\beta_0\|_{\rho}}{\varepsilon_{0,1}}  
        + \frac{\|\beta_1\|_{\rho}}{\varepsilon_{1,1}} \Bigr)
    +\sum_{(i,\alpha) \in \Lambda, |\alpha| \leq 1}
        \frac{\varepsilon_{1,1}}{\varepsilon_{i,|\alpha|}} \times
       t\, \frac{\partial \|a_{i,\alpha}\|_{\rho}(t,\Phi)}
                                 {\partial \rho} \\
    &+ \varepsilon_{1,1}
           \sum_{|\alpha|=2}t^{1-\kappa}
             \frac{\partial \|a_{0,\alpha}\|_{\rho}(t,\Phi)}
                          {\partial \rho}
        +\sum_{(i,\alpha) \in \Lambda, |\alpha| \leq 1}
        \frac{\varepsilon_{1,1}}{\varepsilon_{i,|\alpha|}} \times
            \frac{\partial \|b_{i,\alpha}\|_{\rho}(\Phi)}
                                 {\partial \rho} \\
    &+ \varepsilon_{1,1} \sum_{|\alpha|=|\beta|=2}
     \frac{\partial \|c_{\alpha,\beta}\|_{\rho}(\Phi')}{\partial \rho}
                (\phi_{0,2})^{1/2}, \\
    B(t,\rho) &= \frac{t^{\kappa}}{\varepsilon_{1,1}} 
      + \sum_{(i,\alpha) \in \Lambda, |\alpha| \leq 1}
        \frac{\varepsilon_{1,1}}{\varepsilon_{i,|\alpha|}} \times
             t\, \|a_{i,\alpha}\|_{\rho}(t,\Phi) \\
        &+ \varepsilon_{1,1} \sum_{|\alpha|=2}
            t^{1-\kappa}\|a_{0,\alpha}\|_{\rho}(t,\Phi) 
    + \sum_{(i,\alpha) \in \Lambda, |\alpha| \leq 1}
        \frac{\varepsilon_{1,1}}{\varepsilon_{i,|\alpha|}} \times
             \|b_{i,\alpha}\|_{\rho}(\Phi) \\
    &+ \frac{4 \varepsilon_{1,1}}{3}
       \sum_{|\alpha|=|\beta|=2}\|c_{\alpha,\beta} \|_{\rho}
            (\Phi') (\phi_{0,2})^{1/2}
         + \frac{3}{2\varepsilon_{1,1}}(\phi_{0,2})^{1/2}.
\end{align*}
\end{lem}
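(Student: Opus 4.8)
The plan is to apply the operator $t\,\partial/\partial t+2h$ to $q$ and to split the resulting expression into a part majorized by $q$ itself and a part majorized by $\partial q/\partial\rho$. Two structural facts drive everything: $t\,\partial/\partial t$ and $\partial/\partial\rho$ commute, and $\partial/\partial\rho$ preserves the majorant order $\ll$ (it acts coefficientwise on the $\rho$-expansion). Combined with Lemma \ref{Lemma3}, these let me transport $t\,\partial/\partial t+2h$ through the $\rho$-derivatives defining $\phi_{0,1}=\partial\phi_{0,0}/\partial\rho$, $\phi_{1,1}=\partial\phi_{1,0}/\partial\rho$ and $\phi_{0,2}=\partial^2\phi_{0,0}/\partial\rho^2$; for instance
\[
    \Bigl(t\frac{\partial}{\partial t}+2h\Bigr)\phi_{0,2}
    =\frac{\partial^2}{\partial\rho^2}
        \Bigl[\Bigl(t\frac{\partial}{\partial t}+2h\Bigr)\phi_{0,0}\Bigr]
    \ll\frac{\partial^2}{\partial\rho^2}\phi_{1,0}
    =\frac{\partial}{\partial\rho}\phi_{1,1},
\]
and likewise for $\phi_{0,1}$ and $\phi_{1,1}$ using the two inequalities of Lemma \ref{Lemma3}(2).

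Next I record the elementary domination inequalities that turn each $\phi_{i,j}$ back into $q$: since every $\phi_{i,j}$ enters $q$ with a positive weight and all $\phi_{i,j}\geq 0$, one has $\phi_{0,0}\leq q/\varepsilon_{0,0}$, $\phi_{1,0}\leq q$, $\phi_{0,1}\leq q/\varepsilon_{0,1}$, $\phi_{1,1}\leq q/\varepsilon_{1,1}$, $\phi_{0,2}\leq q/t^{\kappa}$ and $(\phi_{0,2})^{3/2}\leq q$. For the one nonlinear summand I would use the Leibniz identity
\[
    \Bigl(t\frac{\partial}{\partial t}+2h\Bigr)(\phi_{0,2})^{3/2}
    =\frac{3}{2}(\phi_{0,2})^{1/2}
        \Bigl(t\frac{\partial}{\partial t}+2h\Bigr)\phi_{0,2}
     -h(\phi_{0,2})^{3/2}
    \leq\frac{3}{2}(\phi_{0,2})^{1/2}
        \Bigl(t\frac{\partial}{\partial t}+2h\Bigr)\phi_{0,2},
\]
in which the favorable remainder $-h(\phi_{0,2})^{3/2}$ arises from the gap between the weight $2h$ and the exponent $3/2$ and is discarded because $h>0$.

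With these tools the proof becomes a term-by-term account. The summand $\varepsilon_{0,0}\phi_{0,0}$ gives $\varepsilon_{0,0}\phi_{1,0}\leq\varepsilon_{0,0}q$ (the leading $A$-term); the summand $\phi_{1,0}$, via Lemma \ref{Lemma3}(2), reproduces the majorant of the right-hand side of (\ref{3.3}), whose $\beta$-, $a$-, $b$- and $c$-pieces become the next $A$-terms once each trailing $\phi_{i,|\alpha|}$ is replaced by the appropriate multiple of $q$ (in particular $(\phi_{0,2})^2=(\phi_{0,2})^{1/2}(\phi_{0,2})^{3/2}\leq(\phi_{0,2})^{1/2}q$ yields the $c$-term of $A$, and $t\,\phi_{0,2}\leq t^{1-\kappa}q$ the $t^{1-\kappa}$-term). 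The factor $t^{\kappa}\phi_{0,2}$ contributes $\kappa$ to $A$ together with a higher $\rho$-derivative absorbed by the first summand $t^{\kappa}/\varepsilon_{1,1}$ of $B$; the factor $\varepsilon_{0,1}\phi_{0,1}$ gives $(\varepsilon_{0,1}/\varepsilon_{1,1})q$. The bulky summand is $\varepsilon_{1,1}\phi_{1,1}$: here I differentiate the entire right-hand side of (\ref{3.3}) once in $\rho$, so each product splits by Leibniz into a \emph{coefficient differentiated} piece, dominated by $q$ and collected into the $\varepsilon_{1,1}(\cdots)$ block of $A$, and a \emph{$\phi$ differentiated} piece, a higher $\rho$-derivative routed into $B$.

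The step I expect to be the main obstacle is exactly this last bookkeeping, and concretely the fixing of the three non-obvious weights of $B$. The derivative $\partial q/\partial\rho$ contains the term $\frac{3}{2}(\phi_{0,2})^{1/2}\,\partial\phi_{0,2}/\partial\rho$ inherited from $(\phi_{0,2})^{3/2}$ as well as $\varepsilon_{1,1}\,\partial\phi_{1,1}/\partial\rho$; matching the quadratic higher-derivative piece $2\varepsilon_{1,1}\|c_{\alpha,\beta}\|_{\rho}(\Phi')\,\phi_{0,2}\,\partial\phi_{0,2}/\partial\rho$ against the former forces the weight $4\varepsilon_{1,1}/3$, matching the derivative of $(\phi_{0,2})^{3/2}$ against the latter forces $3/(2\varepsilon_{1,1})$, and matching the $t^{\kappa}$-derivative term forces $t^{\kappa}/\varepsilon_{1,1}$. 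Once one checks that every higher-$\rho$-derivative term produced above has the form (a summand of $B$) $\times$ (a summand of $\partial q/\partial\rho$), the nonnegativity of all $\phi_{i,j}$ makes the full product $B\,\partial q/\partial\rho$ dominate their sum, while the remaining terms assemble into $A\,q$. Reading the majorant relations at $\rho\in[0,R_0^*]$, where nonnegativity upgrades $\ll$ to $\leq$, then gives (\ref{3.6}).
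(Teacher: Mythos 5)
Your proposal is correct and follows essentially the same route as the paper's proof: the same term-by-term treatment of the six summands of $q$, the same dominations $\phi_{i,j}\leq q/\varepsilon_{i,j}$, $t^{\kappa}\phi_{0,2}\leq q$, $(\phi_{0,2})^{3/2}\leq q$ and their $\rho$-derivative analogues, the same Leibniz split of $\varepsilon_{1,1}\phi_{1,1}$ into the $A$- and $B$-blocks, and the same matching of the weights $t^{\kappa}/\varepsilon_{1,1}$, $4\varepsilon_{1,1}/3$ and $3/(2\varepsilon_{1,1})$ in $B$. Your exact identity $\bigl(t\,\partial/\partial t+2h\bigr)(\phi_{0,2})^{3/2}=\frac{3}{2}(\phi_{0,2})^{1/2}\bigl(t\,\partial/\partial t+2h\bigr)\phi_{0,2}-h(\phi_{0,2})^{3/2}$ is in fact a slightly cleaner rendering of the paper's step 6, which bounds $2h(\phi_{0,2})^{3/2}$ by $3h(\phi_{0,2})^{3/2}$ and contains a harmless misprint $(\phi_{0,1})^{1/2}$ for $(\phi_{0,2})^{1/2}$.
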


\begin{proof}
    By the definition of $q(t,\rho)$ we have 
\begin{equation}\label{3.7}
   \phi_{i,j} \leq \frac{1}{\varepsilon_{i,j}} q \quad
       (i,j=0,1), 
        \quad  \phi_{0,2} \leq q^{2/3}, 
        \quad t^{\kappa} \phi_{0,2} \leq q 
\end{equation}
on $(0,\sigma_0^*] \times [0,R_0^*]$. Since
\begin{align*}
    \frac{\partial}{\partial \rho}q
      = &\varepsilon_{0,0} 
          \frac{\partial}{\partial \rho}\phi_{0,0}
         + \frac{\partial}{\partial \rho} \phi_{1,0} 
      + t^\kappa \frac{\partial}{\partial \rho} \phi_{0,2} \\
       &+ \varepsilon_{0,1} 
             \frac{\partial}{\partial \rho} \phi_{0,1} 
       + \varepsilon_{1,1} 
             \frac{\partial}{\partial \rho} \phi_{1,1} 
        + \frac{3}{2}(\phi_{0,2})^{1/2}
            \frac{\partial}{\partial \rho} \phi_{0,2}
\end{align*}
we have also
\begin{equation}\label{3.8}
   \begin{split}
   &\frac{\partial}{\partial \rho}\phi_{i,j} 
        \leq \frac{1}{\varepsilon_{i,j}}
                     \frac{\partial}{\partial \rho}q
       \quad (i,j=0,1), \\
   &(\phi_{0,2})^{1/2} \frac{\partial}{\partial \rho} \phi_{0,2}
         \leq \frac{2}{3} \frac{\partial}{\partial \rho} q,
     \quad  t^\kappa \frac{\partial}{\partial \rho} \phi_{0,2}
               \leq \frac{\partial}{\partial \rho} q 
    \end{split}
\end{equation}
on $(0,\sigma_0^*] \times [0,R_0^*]$. By using these inequality, 
let us do a calculation.

\par
   1) About $\varepsilon_{0,0} \phi_{0,0}(t,\rho)$, by (2) 
of Lemma \ref{Lemma3} and (\ref{3.7}) we have
\[
    \Bigl(t \frac{\partial}{\partial t} + 2h \Bigr)
                  (\varepsilon_{0,0} \phi_{0,0})
     \ll \varepsilon_{0,0} \phi_{1,0} \leq \varepsilon_{0,0} q.
\]
\par
   2) About $\phi_{1,0}$, by (2) of Lemma \ref{Lemma3} and 
(\ref{3.7}) we have
\begin{align*}
    &\Bigl(t \frac{\partial}{\partial t} + 2h \Bigr) 
      \phi_{1,0} \\
    &\ll \|\beta_0\|_{\rho} \phi_{0,0}
               + \|\beta_1\|_{\rho} \phi_{1,0} 
       + t \sum_{(i,\alpha) \in \Lambda} 
       \|a_{i,\alpha}\|_{\rho}(t,\Phi) \phi_{i,|\alpha|} \\
    &\qquad + \!\!\! \sum_{(i,\alpha) \in \Lambda, |\alpha| \leq 1}
       \!\!\! \|b_{i,\alpha}\|_{\rho}(\Phi) \phi_{i,|\alpha|}
         + \sum_{|\alpha|=|\beta|=2}
    \|c_{\alpha, \beta} \|_{\rho}(\Phi')(\phi_{0,2})^2 \\
    &\leq \Bigl( \frac{1}{\varepsilon_{0,0}}\|\beta_0\|_{\rho}
                    + \|\beta_1\|_{\rho} \Bigr)q \\
    &\qquad + \sum_{(i,\alpha) \in \Lambda, |\alpha| \leq 1}
         \frac{1}{\varepsilon_{i,|\alpha|}} \,t \,
        \|a_{i,\alpha}\|_{\rho}(t,\Phi) \, q
         + t^{1-\kappa} \sum_{|\alpha|=2}
             \|a_{0,\alpha}\|_{\rho}(t,\Phi) q \\
    &\qquad + \sum_{(i,\alpha) \in \Lambda, |\alpha| \leq 1}
                 \frac{1}{\varepsilon_{i,|\alpha|}} 
             \|b_{i,\alpha}\|_{\rho}(\Phi) \, q
         + \sum_{|\alpha|=|\beta|=2}
           \|c_{\alpha,\beta} \|_{\rho}(\Phi')
             (\phi_{0,2})^{1/2} q.
\end{align*}
\par
   3) Let us consider $t^{\kappa} \phi_{0,2}(t,\rho)$. Since 
$(\partial/\partial \rho)\phi_{1,1} 
      \ll (1/\varepsilon_{1,1}) (\partial/\partial \rho)q$ 
holds, we have 
\[
    \Bigl(t \frac{\partial}{\partial t} + 2h \Bigr)\phi_{0,2}
    = \Bigl(\frac{\partial}{\partial \rho} \Bigr)^2
          \Bigl(t \frac{\partial}{\partial t} + 2h \Bigr)\phi_{0,0}
    \ll \Bigl(\frac{\partial}{\partial \rho} \Bigr)^2 \phi_{1,0}
    = \frac{\partial}{\partial \rho}\phi_{1,1}
      \leq \frac{1}{\varepsilon_{1,1}}\frac{\partial}{\partial \rho} q
\]
and so
\begin{align*}
   &\Bigl(t \frac{\partial}{\partial t} + 2h \Bigr) 
            t^{\kappa} \phi_{0,2}
    = t^{\kappa} \Bigl(t \frac{\partial}{\partial t} + 2h \Bigr) 
          \phi_{0,2} + \kappa t^{\kappa} \phi_{0,2} 
   \leq \frac{t^{\kappa}}{\varepsilon_{1,1}}
              \frac{\partial}{\partial \rho} q + \kappa q.
\end{align*}
\par
   4) About $\varepsilon_{0,1} \phi_{0,1}(t,\rho)$, we have
\begin{align*}
    \Bigl(t \frac{\partial}{\partial t} + 2h \Bigr)
           (\varepsilon_{0,1}\phi_{0,1})
    &= \varepsilon_{0,1}\frac{\partial}{\partial \rho}
       \Bigl(t \frac{\partial}{\partial t} + 2h \Bigr)\phi_{0,0} \\
    &\ll \varepsilon_{0,1} \frac{\partial}{\partial \rho}\phi_{1,0}
     = \varepsilon_{0,1}\phi_{1,1}
    \leq \frac{\varepsilon_{0,1}}{\varepsilon_{1,1}} q.
\end{align*}
\par
   5) About $\varepsilon_{1,1} \phi_{1,1}(t,\rho)$, by (2) of 
Lemma \ref{Lemma3} we have
\begin{align*}
    &\Bigl(t \frac{\partial}{\partial t} + 2h \Bigr)
           (\varepsilon_{1,1}\phi_{1,1})
    = \varepsilon_{1,1}\frac{\partial}{\partial \rho}
       \Bigl(t \frac{\partial}{\partial t} + 2h \Bigr)\phi_{1,0} \\
    &\ll \varepsilon_{1,1} \frac{\partial}{\partial \rho}
     \biggl[\|\beta_0\|_{\rho} \phi_{0,0}
               + \|\beta_1\|_{\rho} \phi_{1,0} 
       + t \sum_{(i,\alpha) \in \Lambda} 
       \|a_{i,\alpha}\|_{\rho}(t,\Phi) \phi_{i,|\alpha|} \\
    &+ \!\!\! \sum_{(i,\alpha) \in \Lambda, |\alpha| \leq 1}
       \!\!\! \|b_{i,\alpha}\|_{\rho}(\Phi) \phi_{i,|\alpha|}
         + \sum_{|\alpha|=|\beta|=2}\|c_{\alpha,\beta} \|_{\rho}
                (\Phi')(\phi_{0,2})^2 \biggr].
\end{align*}
By calculating the right side of the above formula and then by 
using (\ref{3.7}) and (\ref{3.8}) we have
\begin{align*}
    &\Bigl(t \frac{\partial}{\partial t} + 2h \Bigr)
           (\varepsilon_{1,1}\phi_{1,1}) \\
    &\leq 
       \varepsilon_{1,1}\Bigl(\frac{1}{\varepsilon_{0,0}}  
       \frac{\partial \|\beta_0\|_{\rho}}{\partial \rho}
       + \frac{\partial \|\beta_1\|_{\rho}}{\partial \rho} \Bigr) q
     + \varepsilon_{1,1} \Bigl(
      \frac{\|\beta_0\|_{\rho}}{\varepsilon_{0,1}}  
      + \frac{\|\beta_1\|_{\rho}}{\varepsilon_{1,1}} \Bigr)q \\
    &+\sum_{(i,\alpha) \in \Lambda, |\alpha| \leq 1}
        \frac{\varepsilon_{1,1}}{\varepsilon_{i,|\alpha|}} \times
       t \frac{\partial \|a_{i,\alpha}\|_{\rho}(t,\Phi)}
                                 {\partial \rho} \times q \\
    &+ \sum_{(i,\alpha) \in \Lambda, |\alpha| \leq 1}
        \frac{\varepsilon_{1,1}}{\varepsilon_{i,|\alpha|}} \times
             t \|a_{i,\alpha}\|_{\rho}(t,\Phi) 
              \frac{\partial}{\partial \rho} q \\
    &+ \varepsilon_{1,1}
          t^{1-\kappa} \sum_{|\alpha|=2} 
           \frac{\partial \|a_{0,\alpha}\|_{\rho}(t,\Phi)}
                          {\partial \rho} q 
         + \varepsilon_{1,1}t^{1-\kappa} \sum_{|\alpha|=2}
         \|a_{0,\alpha}\|_{\rho}(t,\Phi) 
              \frac{\partial}{\partial \rho} q  \\
    &+\sum_{(i,\alpha) \in \Lambda, |\alpha| \leq 1}
        \frac{\varepsilon_{1,1}}{\varepsilon_{i,|\alpha|}} \times
            \frac{\partial \|b_{i,\alpha}\|_{\rho}(\Phi)}
                                 {\partial \rho} q \, 
     + \sum_{(i,\alpha) \in \Lambda, |\alpha| \leq 1}
        \frac{\varepsilon_{1,1}}{\varepsilon_{i,|\alpha|}} \times
             \|b_{i,\alpha}\|_{\rho}(\Phi) 
              \frac{\partial}{\partial \rho} q \\
    &+ \varepsilon_{1,1} \sum_{|\alpha|=|\beta|=2}
    \frac{\partial \|c_{\alpha,\beta}\|_{\rho}(\Phi')}{\partial \rho}
                (\phi_{0,2})^{1/2} q \\
    &+ \frac{4 \varepsilon_{1,1}}{3} \sum_{|\alpha|=|\beta|=2}
          \|c_{\alpha,\beta} \|_{\rho}(\Phi') (\phi_{0,2})^{1/2}
          \frac{\partial}{\partial \rho} q
\end{align*}
\par
   6) Let us consider $(\phi_{0,2})^{3/2}$. Since
\[
    \Bigl(t \frac{\partial}{\partial t} + 2h \Bigr)\phi_{0,2}
    = \frac{\partial}{\partial \rho} 
          \Bigl(t \frac{\partial}{\partial t} + 2h \Bigr)\phi_{0,1}
    \ll \frac{\partial}{\partial \rho}\phi_{1,1}
    \leq \frac{1}{\varepsilon_{1,1}}
                    \frac{\partial}{\partial \rho} q
\]
we have
\begin{align*}
   &\Bigl(t \frac{\partial}{\partial t} + 2h \Bigr) 
      (\phi_{0,2})^{3/2} 
       = \frac{3}{2}(\phi_{0,2})^{1/2}
          t \frac{\partial}{\partial t} \phi_{0,2}+
               2h (\phi_{0,2})^{3/2} \\
   &\leq \frac{3}{2}(\phi_{0,1})^{1/2}
       \Bigl(t \frac{\partial}{\partial t} + 2h \Bigr)\phi_{0,2}
     \leq \frac{3}{2\varepsilon_{1,1}}(\phi_{0,2})^{1/2}
                \frac{\partial}{\partial \rho} q.
\end{align*}
\par
   7) Thus, by taking the summation from 1) to 6) we have the 
result (\ref{3.6}). 
\end{proof}

\par
   Step 7. Let us estimate $A(t,\rho)$ and $B(t,\rho)$.
We have

\begin{lem}\label{Lemma5}
    By taking $\varepsilon_{0,0}>0$, 
$\varepsilon_{0,1}>0$, $\varepsilon_{1,1}>0$, $0<\kappa<1/2$, 
$\sigma_0>0$ and $R_0>0$ suitably, we have
\begin{align}
    &A(t,\rho) \leq h, \label{3.9} \\
    &B(t,\rho) \leq 
       C_1t^{\kappa} +C_2q + C_3 q^{2/3}+C_4 q^{1/3}
             \label{3.10}
\end{align}
on $(0,\sigma_0] \times[0,R_0]$ for some $C_i>0$ 
{\rm (}$i=1,2,3,4${\rm )}.
\end{lem}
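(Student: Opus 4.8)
The plan is to estimate each summand of the explicit formulas for $A(t,\rho)$ and $B(t,\rho)$ given in Lemma \ref{Lemma4}, and then to fix the free parameters $\kappa,\varepsilon_{0,0},\varepsilon_{1,1},\varepsilon_{0,1}$ and the radii $\sigma_0,R_0$ in a carefully nested order. The two required inequalities are of a different character. The bound (\ref{3.10}) is permissive: it suffices to dominate \emph{every} term of $B$ by one of $t^\kappa$, $q$, $q^{2/3}$, $q^{1/3}$, and no smallness is asked for. The bound (\ref{3.9}) is restrictive: $A$ must be made smaller than the \emph{fixed} constant $h$, so each term of $A$ must be either a constant we can choose small or a quantity tending to $0$ as $t\to+0$ or $\rho\to+0$. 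Throughout I take $\sigma_0\le 1$, so that $t\le t^\kappa$ and, since I will insist $\kappa<1/2$, also $t^{1-\kappa}\le t^\kappa$ on $(0,\sigma_0]$.

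For (\ref{3.10}) I would proceed term by term. The term $t^\kappa/\varepsilon_{1,1}$ is already of the form $C_1t^\kappa$. The $a$-terms $t\,\|a_{i,\alpha}\|_\rho(t,\Phi)$ and $t^{1-\kappa}\|a_{0,\alpha}\|_\rho(t,\Phi)$ are bounded by constant multiples of $t^\kappa$, since $\|a_{i,\alpha}\|_\rho$ is bounded on the compact domain of Lemma \ref{Lemma3}(3) and $\Phi$ stays small; this is exactly where $\kappa<1/2$ is used, to absorb $t^{1-\kappa}$ into $t^\kappa$. The $b$-terms satisfy $\|b_{i,\alpha}\|_\rho(\Phi)\le B_{i,\alpha}|\Phi|$ by Lemma \ref{Lemma3}(3), and $|\Phi|$ is a finite sum of the $\phi_{i,|\alpha|}$, each bounded by $q/\varepsilon_{i,|\alpha|}$ for $|\alpha|\le1$ and by $q^{2/3}$ for $|\alpha|=2$ through (\ref{3.7}); hence they produce $C_2q+C_3q^{2/3}$. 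Finally the two $c$-type terms carry a factor $(\phi_{0,2})^{1/2}\le q^{1/3}$, giving $C_4q^{1/3}$. Collecting the four bounds yields (\ref{3.10}).

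For (\ref{3.9}) I would split the summands of $A$ into three groups. Group one is the genuinely constant part: $\varepsilon_{0,0}$, $\kappa$, $\varepsilon_{0,1}/\varepsilon_{1,1}$, and the $\varepsilon_{1,1}$-weighted bounded terms $\tfrac{\varepsilon_{1,1}}{\varepsilon_{0,0}}\partial_\rho\|\beta_0\|_\rho$ and $\varepsilon_{1,1}\partial_\rho\|\beta_1\|_\rho$ (these derivatives are bounded because $\beta_i(0)=0$). Group two is everything carrying an explicit positive power $t$ or $t^{1-\kappa}$. Group three is everything carrying a factor $\|\beta_i\|_\rho$, $\|b_{i,\alpha}\|_\rho(\Phi)$, $\partial_\rho\|b_{i,\alpha}\|_\rho(\Phi)$ or $(\phi_{0,2})^{1/2}$ that tends to $0$ with $\rho$ or with the $\phi_{i,j}$. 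The delicate point is that group one cannot be shrunk termwise independently, since its terms involve the ratios $\varepsilon_{0,1}/\varepsilon_{1,1}$, $\varepsilon_{1,1}/\varepsilon_{0,0}$ and $\varepsilon_{1,1}/\varepsilon_{0,1}$. I would therefore fix the parameters in the hierarchy $1\gg\varepsilon_{0,0}\gg\varepsilon_{1,1}\gg\varepsilon_{0,1}>0$ together with $\kappa<1/2$ small, making the group-one sum $\le h/2$; the only surviving large ratio $\varepsilon_{1,1}/\varepsilon_{0,1}$ occurs in the group-three term $(\varepsilon_{1,1}/\varepsilon_{0,1})\|\beta_0\|_\rho$, which is harmless since $\|\beta_0\|_\rho\le H_0\rho$. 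After the $\varepsilon$'s and $\kappa$ are frozen, I would shrink $\sigma_0$ and $R_0$ to make groups two and three $\le h/2$, using $\|\beta_i\|_\rho\le H_i\rho$, Lemma \ref{Lemma2}(2) for the smallness of the $\phi_{i,j}$ and of $|\Phi|$, $(\phi_{0,2})^{1/2}\to0$, and Lemma \ref{Lemma3}(4) for $\partial_\rho\|b_{i,\alpha}\|_\rho(\Phi)=o(R)$.

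The main obstacle is the top-order coefficient-derivatives in group two of $A$, namely $t\,\partial_\rho\|a_{i,\alpha}\|_\rho(t,\Phi)$ for $|\alpha|\le1$ and $t^{1-\kappa}\partial_\rho\|a_{0,\alpha}\|_\rho(t,\Phi)$ for $|\alpha|=2$, and likewise $\partial_\rho\|c_{\alpha,\beta}\|_\rho(\Phi')$. Differentiating $\|a_{i,\alpha}\|_\rho(t,\Phi)$ in $\rho$ by the chain rule produces, through the $z_{0,2}$-variable, the quantity $\partial_\rho\phi_{0,2}=\phi_{0,3}$, which is \emph{not} among the controlled functions $\{\phi_{i,j}\}_{(i,j)\in J}$ and so cannot be bounded by Lemma \ref{Lemma2}(2). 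The way around this is to control these total $\rho$-derivatives not through $J$ but through convergence of the majorant norms: since $\|a_{i,\alpha}\|_\rho(t,\Phi)$ is a convergent power series in $\rho$ with nonnegative coefficients, bounded on $[0,R_0^*]$ uniformly for small $t$, a Cauchy estimate in $\rho$ (the very device already used to prove Lemma \ref{Lemma3}(4) for the $b$-terms) shows $\partial_\rho\|a_{i,\alpha}\|_\rho(t,\Phi)$ is bounded on $[0,R_0]$; the explicit factors $t$, $t^{1-\kappa}$ then force these terms to $0$. I would settle this point first, since it justifies placing these terms in group two and thereby fixes the whole bookkeeping; the remainder is the nested choice of constants described above.
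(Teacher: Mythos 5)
Your proposal is correct and follows essentially the same route as the paper: bound $B$ termwise using $t,t^{1-\kappa}=O(t^{\kappa})$ (from $\kappa<1/2$, $\sigma_0\leq 1$), $\|b_{i,\alpha}\|_{\rho}(\Phi)\leq B_{i,\alpha}|\Phi|$ and (\ref{3.7}); and for $A$ fix the parameters in exactly the paper's nested order ($\varepsilon_{0,0}\leq h/4$ first, then $\varepsilon_{1,1}$ small against $\partial_{\rho}\|\beta_i\|_{\rho}$, then $\kappa+\varepsilon_{0,1}/\varepsilon_{1,1}\leq h/4$, finally $\sigma_0,R_0$ via Lemma \ref{Lemma2}(2), Lemma \ref{Lemma3}(3)--(4)). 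Your closing paragraph on bounding $\partial_{\rho}\|a_{i,\alpha}\|_{\rho}(t,\Phi)$ (where the chain rule produces $\phi_{0,3}\notin\{\phi_{i,j}\}_{(i,j)\in J}$) by a Cauchy estimate for nonnegative-coefficient majorant series is a detail the paper leaves implicit, and you resolve it correctly since these terms carry explicit positive powers of $t$.
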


\begin{proof}
   1) First, we take $\varepsilon_{0,0}$ so that 
$0<\varepsilon_{0,0} \leq h/4$.
\par
   2) Since $\varepsilon_{0,0}$ is a fixed constant, by taking 
$\varepsilon_{1,1}>0$ sufficiently small we have the condition
\[
   \varepsilon_{1,1} \Bigl(\frac{1}{\varepsilon_{0,0}}  
       \frac{\partial \|\beta_0\|_{\rho}}{\partial \rho}
       + \frac{\partial \|\beta_1\|_{\rho}}{\partial \rho} \Bigr)
    \leq \frac{h}{4} \quad 
       \mbox{on $(0,\sigma_0^*] \times [0,R_0^*]$}.
\]
\par
   3) Thirdly, we take $\kappa>0$ and $\varepsilon_{0,1}>0$ 
so that $\kappa+\varepsilon_{0,1}/\varepsilon_{1,1} \leq h/4$. 
\par
   4) Now, $\varepsilon_{i,|\alpha|}>0$ 
($(i,\alpha) \in \Lambda, |\alpha| \leq 1$) are fixed. Since 
$\|\beta_i\|_{\rho} \leq H_i \rho$ ($i=0,1$) and 
$\|b_{i,\alpha}\|_{\rho}(\Phi) \leq B_{i,\alpha}|\Phi|$ 
($(i,\alpha) \in \Lambda, |\alpha| \leq 1$) are known 
(by (3) of Lemma \ref{Lemma3}), by the conditions (2) of Lemma \ref{Lemma2}
and (4) of Lemma \ref{Lemma3} we can take $\sigma_0>0$ and $R_0>0$ 
suitably sufficiently small so that the condition (\ref{3.9}) is valid 
on $(0,\sigma_0] \times [0,R_0]$.
\par
    5) Since $0<\kappa<1/2$ is supposed, we have $t=O(t^{\kappa})$ and 
$t^{1-\kappa}=O(t^{\kappa})$. Hence, by the condition 
$\|b_{i,\alpha}\|_{\rho}(\Phi) \leq B_{i,\alpha}|\Phi|$ we have
\[
    B(t,\rho) \leq 
       K_1t^{\kappa} +K_2 |\Phi| + K_3 (\phi_{0,2})^{1/2}
\]
for some $K_i>0$ ($i=1,2,3$). By applying (\ref{3.7}) to this estimate 
we have the condition (\ref{3.10}).
\end{proof}

%
%
%

\section{Proof of Theorem 2}\label{section4}
%

   In order to prove Theorem \ref{Theorem2}, it is enough to show

\begin{prop}\label{Proposition1}
      Under the situation in \S 3, there are 
$\sigma>0$ and $\delta>0$ such that $q(t,\rho)=0$ holds on 
$(0,\sigma) \times [0,\delta)$.
\end{prop}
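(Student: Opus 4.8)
The plan is to read (\ref{3.6}) as a single first-order differential inequality for the scalar function $q(t,\rho)$ and to integrate it by the method of characteristics, exploiting that $q$, $\partial q/\partial\rho$ and $B$ are all nonnegative (the first two because $q$ is assembled from the norms $\|\cdot\|_{\rho}$, whose $\rho$-expansions have nonnegative coefficients, and $B\ge0$ by its definition in Lemma \ref{Lemma4}). First I would apply Lemma \ref{Lemma5} to fix $\varepsilon_{0,0},\varepsilon_{0,1},\varepsilon_{1,1},\kappa,\sigma_0,R_0$ so that $A\le h$; then (\ref{3.6}) reduces to
\[
\Bigl(t\frac{\partial}{\partial t}+h\Bigr)q\le B(t,\rho)\,\frac{\partial q}{\partial\rho}\qquad\text{on }(0,\sigma_0]\times[0,R_0],
\]
with $B$ bounded as in (\ref{3.10}).

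For a target point $(t_0,\rho_0)$ I would introduce the backward characteristic $\rho=\rho(t)$ solving $t\,d\rho/dt=-B(t,\rho)$ with $\rho(t_0)=\rho_0$, which exists on a maximal subinterval $(t_1,t_0]$ by Peano's theorem. Setting $\psi(t)=q(t,\rho(t))$, the chain rule together with the displayed inequality gives
\[
t\,\psi'(t)=\Bigl(t\frac{\partial}{\partial t}-B\frac{\partial}{\partial\rho}\Bigr)q\le-h\,\psi(t),
\]
so $t^{h}\psi(t)$ is nonincreasing in $t$. Since $\psi$ is bounded (Lemma \ref{Lemma2}(2)) and $t^{h}\to0$, we have $t^{h}\psi(t)\to0$ as $t\to+0$; hence, provided the characteristic reaches $t=0$ inside the strip (that is $t_1=0$ and $\rho(t)<\delta$ throughout), monotonicity forces $t_0^{h}\psi(t_0)\le\lim_{t\to+0}t^{h}\psi(t)=0$, i.e. $q(t_0,\rho_0)=0$. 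This is the whole mechanism: $t^{h}q$ can only decrease along a characteristic, while the boundary value carried down to $t=0$ is annihilated by the factor $t^{h}$.

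The main obstacle is to guarantee $t_1=0$ with the characteristic confined to $0\le\rho<\delta$. As $B\ge0$, the height $\rho(t)$ increases when $t$ decreases, and the total excursion is $\int_0^{t_0}B(t,\rho(t))\,dt/t$. The term $C_1t^{\kappa}$ of (\ref{3.10}) contributes only $C_1t_0^{\kappa}/\kappa$, which is small; the dangerous part is $C_2q+C_3q^{2/3}+C_4q^{1/3}$, whose integral against $dt/t$ does \emph{not} converge from the mere uniform bound $q\le\epsilon$, because $\int_0^{t_0}dt/t$ diverges. To beat this I would use the sharper $\rho$-graded smallness of Lemma \ref{Lemma2}(1): for $\sigma$ small enough $\sup_{(0,\sigma]\times[0,\delta]}\phi_{i,j}=o(\delta^{4-j})$, whence $q=o(\delta^{2})$ and the hazardous part of $B$ is $o(\delta^{2/3})$ as $\delta\to0$. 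I would then run a continuity/bootstrap argument in the initial height $\rho_0$: for $\rho_0$ near $0$ the excursion is manifestly below $\delta$, so the characteristic reaches $t=0$ and $q$ vanishes along it; one enlarges the admissible range of $\rho_0$ up to a fixed $\delta>0$, at each stage using the already-established vanishing of $q$ (hence of the $q$-part of $B$) on the region swept out so far to keep the accumulated displacement under $\delta-\rho_0$. Converting the $t$-monotonicity of $t^{h}q$ and this $\rho$-smallness into an honest bound $\int_0^{t_0}B\,dt/t<\delta-\rho_0$ is the real content of Proposition \ref{Proposition1}; once it is secured, the second paragraph yields $q\equiv0$ on $(0,\sigma)\times[0,\delta)$.
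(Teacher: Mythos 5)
Your setup (reduce via Lemma \ref{Lemma5} to $(t\partial_t+h)q\le B\,\partial_\rho q$, run the backward characteristic $t\,d\rho/dt=-B$, and observe that $t^h q(t,\rho(t))$ is nonincreasing) coincides with the paper's Steps 1--2, but there is a genuine gap exactly where you say ``the real content'' lies: you never bound the excursion $\int_0^{t_0}B(\tau,\rho(\tau))\,d\tau/\tau$, and the bootstrap you sketch cannot do it. The divergence of $\int_0^{t_0}d\tau/\tau$ is independent of the initial height $\rho_0$, so the excursion is \emph{not} ``manifestly below $\delta$'' for $\rho_0$ near $0$; and the induction on $\rho_0$ is circular, since the vanishing of $q$ on the already-swept region says nothing about $q$ along the new characteristic, which lies above that region --- nor can you order characteristics by comparison, because $B$ is merely continuous and the ODE solutions are not unique (the paper explicitly works with a maximally extended, possibly non-unique solution). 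The missing idea is that the monotonicity you already proved closes the loop by itself: since $\tau^hq^*(\tau)\le t_1^hq^*(t_1)$ and $q^*(t_1)\le r:=\sup q$, one has along the characteristic
\[
B(\tau,\rho(\tau))\le C_1\tau^{\kappa}+C_2\Bigl(\tfrac{t_1}{\tau}\Bigr)^{h}r+C_3\Bigl(\tfrac{t_1}{\tau}\Bigr)^{2h/3}r^{2/3}+C_4\Bigl(\tfrac{t_1}{\tau}\Bigr)^{h/3}r^{1/3},
\]
and $\int_{t_1}^{t_0}(t_1/\tau)^a\,d\tau/\tau\le 1/a$ converges \emph{uniformly in $t_1$}. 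This is Lemma \ref{Lemma8}: the total excursion is at most $\frac{C_1}{\kappa}\sigma^{\kappa}+\frac{C_2}{h}r+\frac{3C_3}{2h}r^{2/3}+\frac{3C_4}{h}r^{1/3}$, which Lemma \ref{Lemma6} makes $<R/2$; confinement in the compact set $[\xi,R_1]\subset(0,R)$ then forces $t_\xi=0$ by the standard extension theorem, and your final limit argument applies. So the decay factor that you use only at the last step must also be fed back into the excursion integral --- that is the one idea separating your sketch from a proof.

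A secondary quantitative slip: from Lemma \ref{Lemma2}(1) you infer $q=o(\delta^2)$, hence a hazardous part of $B$ of size $o(\delta^{2/3})$; but $o(\delta^{2/3})$ is \emph{larger} than $\delta$ for small $\delta$, so even with a bounded time-integral the excursion would not fit under $\delta-\rho_0$. The paper instead gets $r=o(R^3)$ (in the iterated limit, the term $t^{\kappa}\phi_{0,2}$ dies as $\sigma\to+0$, and the remaining terms of $q$ are $o(R^4)$, $o(R^3)$ or $(\phi_{0,2})^{3/2}=o(R^3)$), which is precisely what makes the worst term $r^{1/3}=o(R)$ and hence beats $R/2$; this is where the exponent $4$ in (\ref{2.3}) and the exponent $3/2$ in the definition of $q$ earn their keep.
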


\begin{proof}
    Let us show this, step by step.

   Step 1. By Lemmas \ref{Lemma4} and \ref{Lemma5} we have
\begin{align}
    &\Bigl(t \frac{\partial}{\partial t} + h \Bigr) q
    \leq B(t,\rho)\frac{\partial}{\partial \rho} q,
             \label{4.1}\\
    &B(t,\rho) \leq C_1t^{\kappa}
       + C_2 q + C_3 q^{2/3}+ C_4 q^{1/3} \label{4.2}
\end{align}
on $(0,\sigma_0] \times [0,R_0]$. For $\sigma>0$ and $R>0$ we set
\[
      r= \sup_{(0,\sigma] \times [0,R]} q(t,\rho).
\]
By (1) of Lemma \ref{Lemma2} and the definition of $q(t,\rho)$ we have
\[
     \lim_{\sigma \to +0} \,r \, = \, o(R^3)
       \quad \mbox{{\rm (}as $R \longrightarrow +0${\rm )}}.
\]
This shows

\begin{lem}\label{Lemma6}
     By taking $\sigma>0$ and $R>0$ sufficiently small, 
we have the condition
\[
    \frac{C_1}{\kappa}\sigma^{\kappa}
     + \frac{C_2}{h}r + \frac{3C_3}{2h}r^{2/3} 
           + \frac{3C_4}{h}r^{1/3} < \frac{R}{2} \quad 
    \mbox{on $(0,\sigma] \times [0,R]$}.
\]
\end{lem}

    Step 2. Let $\sigma>0$ and $R>0$ be as in Lemma \ref{Lemma6}. 
Take any $t_0 \in (0,\sigma]$ and $\xi \in (0,R)$; for a while we fix 
them.
\par
    Let us consider the equation
\begin{equation}\label{4.3}
     \frac{d\rho}{dt} = - \frac{B(t,\rho)}{t}, 
       \quad \rho(t_0)=\xi 
\end{equation}
in the region $(0,t_0] \times (0,R)$. Since $B(t,\rho)/t$ 
is a continuous function on $(0,t_0] \times (0,R)$ we have a local 
solution (not necessarily unique). We take a maximally extended 
solution $\rho(t)$ and we denote by $(t_{\xi},t_0]$ the interval of 
existence of this maximally extended solution. Set
\[
    q^*(t)= q(t,\rho(t)), \quad t_{\xi}<t \leq t_0.
\]

\begin{lem}\label{Lemma7}
    Under the above situation, we have the following 
inequality for any $(t_1,\tau)$ satisfying $t_\xi<t_1<\tau \leq t_0$:
\begin{equation}\label{4.4}
     q^*(\tau) \leq \Bigl( \frac{t_1}{\tau} \Bigr)^h q^*(t_1).
\end{equation}
\end{lem}

\begin{proof}
    By (\ref{4.1}) we have
\begin{align*}
    \Bigl(t \frac{d}{dt}+h \Bigr) q^*(t) 
    &=\Bigl(t \frac{\partial}{\partial t}+h \Bigr)
                              q(t, \rho)\Bigr|_{\rho=\rho(t)}
        +\frac{\partial q}{\partial \rho}(t,\rho(t))
          \times t \frac{d\rho(t)}{dt} \\
    &\leq \Bigl[B(t,\rho)
              \frac{\partial q}{\partial \rho}
    + \frac{\partial q}{\partial \rho} 
        \bigl(-B(t,\rho) \bigr) \Bigr] \Bigr|_{\rho=\rho(t)}
    =0,
\end{align*}
that is, 
\[
    \Bigl(t \frac{d}{dt}+h \Bigr) q^*(t) \leq 0
     \quad \mbox{on $(t_{\xi},t_0]$}.
\]
Since this is equivalent to
\[
     \frac{d}{dt}(t^hq^*(t)) \leq 0
     \quad \mbox{on $(t_{\xi},t_0]$}, 
\]
by integrating this from $t_1$ to $\tau$ we have
$\tau^h q^*(\tau) \leq {t_1}^h q^*(t_1)$. This proves (\ref{4.4}).
\end{proof}

   Step 3.  By (\ref{4.2}) and Lemma \ref{Lemma7} we have

\begin{lem}\label{Lemma8}
    Under the above situation, we have the following 
inequalities for any $t_1$ satisfying $t_\xi<t_1< t_0$:
\begin{equation}\label{4.5}
     \xi \leq \rho(t_1) \leq \xi+ \frac{C_1}{\kappa}
          ({t_0}^{\kappa}-{t_1}^{\kappa})
       + \frac{C_2}{h} r + \frac{3C_3}{2h} r^{2/3}
       + \frac{3C_4}{h} r^{1/3}. 
\end{equation}
\end{lem}

\begin{proof}
   Let $t_\xi<t_1< t_0$. By (\ref{4.3}) we have
\[
      \rho(t_1)= \xi + \int_{t_1}^{t_0}
           B(\tau,\rho(\tau)) \frac{d\tau}{\tau}.
\]
Since $B(t,\rho) \geq 0$ we have $\rho(t_1) \geq \xi$. Since 
$q(\tau,\rho(\tau))=q^*(\tau)$, by (\ref{4.2}) and (\ref{4.4}) we 
have
\begin{align*}
    &B(\tau,\rho(\tau)) \\
     &\leq C_1\tau^{\kappa}
       + C_2 q^*(\tau) 
          + C_3 (q^*(\tau))^{2/3}+ C_4 (q^*(\tau))^{1/3} \\
     &\leq C_1\tau^{\kappa}
       + C_2 \Bigl( \frac{t_1}{\tau} \Bigr)^h q^*(t_1) 
          + C_3 \Bigl( \frac{t_1}{\tau} \Bigr)^{2h/3} q^*(t_1)^{2/3}
        + C_4 \Bigl( \frac{t_1}{\tau} \Bigr)^{h/3} q^*(t_1)^{1/3} \\
     &\leq C_1\tau^{\kappa}
       + C_2 \Bigl( \frac{t_1}{\tau} \Bigr)^h r 
          + C_3 \Bigl( \frac{t_1}{\tau} \Bigr)^{2h/3}r^{2/3}
        + C_4 \Bigl( \frac{t_1}{\tau} \Bigr)^{h/3} r^{1/3}
\end{align*}
for $t_1<\tau \leq t_0$, and so
\begin{align}
   \rho(t_1) \leq \xi + \int_{t_1}^{t_0}
       \Bigl(C_1\tau^{\kappa}
       &+ C_2 \Bigl( \frac{t_1}{\tau} \Bigr)^h r \label{4.6}\\
          &+ C_3 \Bigl( \frac{t_1}{\tau} \Bigr)^{2h/3}r^{2/3}
        + C_4 \Bigl( \frac{t_1}{\tau} \Bigr)^{h/3} r^{1/3}
           \Bigr) \frac{d\tau}{\tau}. \notag
\end{align}
Since
\[
     \int_{t_1}^{t_0}\Bigl( \frac{t_1}{\tau} \Bigr)^a
            \frac{d\tau}{\tau}
         = \frac{1}{a} \Bigl(1- \frac{{t_1}^a}{{t_0}^a} \Bigr)
         \leq \frac{1}{a}
\]
holds for any $a>0$, by applying this to (\ref{4.6}) we have 
(\ref{4.5}).
\end{proof}

\begin{cor}\label{Corollary2}
     If $\xi \in (0,R/2)$ we have $t_{\xi}=0$.
\end{cor}

\begin{proof}
   Let $\xi \in (0,R/2)$. Let us show that the condition $t_{\xi}>0$ 
leads us to a contradiction.
\par
   Suppose that $t_{\xi}>0$ holds. Then, by Lemmas 6 and 8 we have
\[
    0<\xi \leq \rho(t_1) \leq \xi+ \frac{C_1}{\kappa} \sigma^{\kappa}
         +\frac{C_2}{h}r + \frac{3C_3}{2h}r^{2/3} 
           + \frac{3C_4}{h}r^{1/3} =R_1< R
\]
for any $t_1$ satisfying $t_\xi<t_1<t_0$. Since 
$K=\{\rho \in (0,R) \,;\, \xi \leq \rho \leq R_1 \}$ is a compact 
subset of $(0,R)$ and since $\rho(t_1) \in K$ holds for any 
$t_1 \in (t_\xi,t_0]$, by a general theory of ordinary differential 
equations (for example, by Theorem 4.1 in \cite{CL}) we can extend 
$\rho(t)$ to $(t_{\xi}-\delta, t_0]$ for some $\delta>0$. This 
contradicts the assumption that $(t_{\xi}, t_0]$ is the interval of 
existence of a maximally extended solution $\rho(t)$. 
\end{proof}

   Step 4. Since $t_{\xi}=0$, by (\ref{4.4}) with $\tau=t_0$ 
we have
\[
    0 \leq q^*(t_0) \leq \Bigl( \frac{t_1}{t_0} \Bigr)^h q^*(t_1)
                  \leq \Bigl( \frac{t_1}{t_0} \Bigr)^h r
\]
for any $0<t_1<t_0$. Since $r>0$ is independent of $t_1$,
by letting $t_1 \longrightarrow 0$ we have $q^*(t_0)=0$.
Since $q^*(t_0)=q(t_0,\xi)$ we have $q(t_0,\rho)=0$ for any
$\rho \in (0,R/2)$. By the continuity we have $q(t_0,\rho)=0$ 
for any $\rho \in [0,R/2)$. 
Since $t_0 \in (0,\sigma]$ is taken arbitrarily 
we have $q(t,\rho)=0$ on $(0,\sigma] \times [0,R/2)$. 
\par
   This completes the proof of Proposition \ref{Proposition1}. 
\end{proof}

%
%
%

\section{Application}\label{section5}
%

   As an application of Theorem \ref{Theorem2}, we have

\begin{thm}[Analytic continuation]\label{Theorem3}
    Let $u(t,x)$ be a holomorphic solution of {\rm (\ref{2.1})} 
on $S_I(r_0) \times D_{R_0}$ for some nonempty open interval $I$, 
$r_0>0$ and $R_0>0$. If $u(t,x)$ satisfies {\rm (\ref{2.3})}, 
$u(t,x)$ can be continued holomorphically up to a neighborhood of 
$(0,0) \in \BC_t \times \BC_x^n$.
\end{thm}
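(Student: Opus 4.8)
The plan is to read off Theorem~\ref{Theorem3} almost immediately from the uniqueness statement Theorem~\ref{Theorem2} combined with the existence-and-regularity statement Theorem~\ref{Theorem1}: the point is that the a priori singular locus $t=0$ turns out to be a point of holomorphy for the solution.

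First I would apply Theorem~\ref{Theorem2}. Since $u(t,x)$ is a holomorphic solution of (\ref{2.1}) on $S_I(r_0) \times D_{R_0}$ satisfying the smallness condition (\ref{2.3}), Theorem~\ref{Theorem2} produces $\sigma>0$ and $\delta>0$ with $u(t,x)=u_0(t,x)$ on $S_I(\sigma) \times D_{\delta}$, where $u_0$ is the solution of (\ref{2.1}) furnished by Theorem~\ref{Theorem1}. Next I would recall that, by Theorem~\ref{Theorem1}, this $u_0(t,x)$ is holomorphic on a genuine (unpunctured) neighborhood $V=\{|t|<\rho\} \times \{|x|<\delta'\}$ of $(0,0) \in \BC_t \times \BC_x^n$; in particular $u_0$ is single-valued and holomorphic across $t=0$.

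With these two facts in hand the continuation is obtained by gluing. I would set $\widetilde u=u$ on $S_I(r_0) \times D_{R_0}$ and $\widetilde u=u_0$ on $V$. The two prescriptions agree on the common sectorial region $S_I(\min\{\sigma,\rho\}) \times D_{\min\{\delta,\delta'\}}$ by the first step, so by the identity theorem they define a single holomorphic function $\widetilde u$ on $(S_I(r_0) \times D_{R_0}) \cup V$. Since $V$ is a full neighborhood of $(0,0)$, the function $\widetilde u$ is the desired holomorphic continuation of $u$ up to a neighborhood of the origin.

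The entire analytic difficulty has already been absorbed into the proof of Theorem~\ref{Theorem2}, so the only care needed here is domain bookkeeping on the universal covering space ${\mathcal R}(\BC \setminus \{0\})$: one must project the sector $S_I$ down to an ordinary sectorial region in $\BC \setminus \{0\}$ and verify that the overlap on which $u=u_0$ is nonempty, so that matching the single-valued germ $u_0$ at the origin against $u$ is unambiguous. I expect this identification---rather than any estimate---to be the only (minor) obstacle; once it is settled, the conclusion is immediate.
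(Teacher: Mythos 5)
Your proposal is correct and is exactly the argument the paper intends: Theorem~\ref{Theorem3} is stated as an immediate application of Theorem~\ref{Theorem2} (which gives $u=u_0$ on $S_I(\sigma)\times D_\delta$) combined with the fact from Theorem~\ref{Theorem1} that $u_0$ is holomorphic on a full neighborhood of $(0,0)$, with the identity theorem upgrading the agreement to the whole (connected) overlap so the gluing is well defined. Your closing remark on the bookkeeping over ${\mathcal R}(\BC\setminus\{0\})$ correctly identifies the only point requiring care, and it is handled as you describe.
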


%
%

%
%
\end{document}